\newtheorem{thm}[subsection]{Theorem}
\newtheorem{cor}[subsection]{Corollary}
\newtheorem{lemma}[subsection]{Lemma}
\newtheorem{conj}[subsection]{Conjecture}
\theoremstyle{definition}
\numberwithin{equation}{section}
\def\phi{{\varphi}}
\def\ra{\rightarrow}
\def\cC{{\mathcal C}}
\def\cD{{\mathcal D}}
\def\cE{{\mathcal E}}
\def\cF{{\mathcal F}}
\def\cH{{\mathcal H}}
\def\cI{{\mathcal I}}
\def\cJ{{\mathcal J}}
\def\cV{{\mathcal V}}
\def\cW{{\mathcal W}}
\def\gg{{\mathfrak g}}
\def\gh{{\mathfrak h}}
\def\gl{{\mathfrak l}}
\def\go{{\mathfrak o}}
\def\gp{{\mathfrak p}}
\def\gs{{\mathfrak s}}
\newfont{\german}{eufm10}
\begin{document}
\pagestyle{plain}

\title
{Generalized parafermions of orthogonal type}

\author{Thomas Creutzig} 
\address{University of Alberta}
\email{creutzig@ualberta.ca}
\thanks{T. C. is supported by NSERC Discovery Grant \#RES0048511}

\author{Vladimir Kovalchuk} 
\address{University of Denver}
\email{vladimir.kovalchuk@du.edu}

\author{Andrew R. Linshaw} 
\address{University of Denver}
\email{andrew.linshaw@du.edu}
\thanks{A. L. is supported by Simons Foundation Grant 635650 and NSF Grant DMS-2001484}

\begin{abstract} There is an embedding of affine vertex algebras $V^k(\gg\gl_n) \hookrightarrow V^k(\gs\gl_{n+1})$, and the coset $\cC^k(n) = \text{Com}(V^k(\gg\gl_n), V^k(\gs\gl_{n+1}))$ is a natural generalization of the parafermion algebra of $\gs\gl_2$. It was called the algebra of generalized parafermions by the third author and was shown to arise as a one-parameter quotient of the universal two-parameter $\cW_{\infty}$-algebra of type $\cW(2,3,\dots)$. In this paper, we consider an analogous structure of orthogonal type, namely $\cD^k(n) = \text{Com}(V^k(\gs\go_{2n}), V^k(\gs\go_{2n+1}))^{\mathbb{Z}_2}$. We realize this algebra as a one-parameter quotient of the two-parameter even spin $\cW_{\infty}$-algebra of type $\cW(2,4,\dots)$, and we classify all coincidences between its simple quotient $\cD_k(n)$ and the algebras $\cW_{\ell}(\gs\go_{2m+1})$ and $\cW_{\ell}(\gs\go_{2m})^{\mathbb{Z}_2}$. As a corollary, we show that for the admissible levels $k = -(2n-2) + \frac{1}{2} (2 n + 2 m -1)$ for $\widehat{\gs\go}_{2n}$ the simple affine algebra $L_k(\gs\go_{2n})$ embeds in $L_k(\gs\go_{2n+1})$, and the coset is strongly rational.
As a consequence, the category of ordinary modules of $L_k(\gs\go_{2n+1})$ at such a level is a braided fusion category.
 \end{abstract}

\maketitle

\section{Introduction}
For $n\geq 1$, the natural embedding of Lie algebras $\gg\gl_n \hookrightarrow \gs\gl_{n+1}$ defined by 
$$ a \mapsto \bigg(\begin{matrix} a & 0\\ 0 & -\text{tr}(a) \end{matrix}\bigg),$$ induces a vertex algebra homomorphism \begin{equation} \label{embeddingA} V^k(\gg\gl_n) \hookrightarrow V^k(\gs\gl_{n+1}).\end{equation} The coset vertex algebra
\begin{equation} \cC^k(n) = \text{Com}(V^k(\gg\gl_n), V^k(\gs\gl_{n+1})) \end{equation} was called the algebra of {\it generalized parafermions} in \cite{LIII}. The reason for this terminology is that for $n=1$, $\cC^k(1)$ is isomorphic to the parafermion algebra $N^k(\gs\gl_2) = \text{Com}(\cH, \gs\gl_2)$, where $\cH$ denotes the Heisenberg algebra corresponding to the Cartan subalgebra $\gh \subseteq\gs\gl_2$.

By  Theorem 8.1 of \cite{LIII}, $\cC^k(n)$ is of type $\cW(2,3,\dots,  n^2+3n+1)$, i.e., it has a minimal strong generating set consisting of one field in each weight $2,3,\dots, n^2+3n+1$. This generalizes the case $n=1$, which appears in \cite{DLY}. When $k$ is a positive integer, \eqref{embeddingA} descends to a map of simple affine vertex algebras $L_k(\gg\gl_n) \hookrightarrow L_k(\gs\gl_{n+1})$, and the coset $\text{Com}(L_k(\gg\gl_n), L_k(\gs\gl_{n+1}))$ coincides with the simple quotient $\cC_k(n)$ of $\cC^k(n)$. By Theorem 13.1 of \cite{ACL},
we have an isomorphism \begin{equation} \label{typeAcoinc} \cC_k(n) \cong \cW_{\ell}(\gs\gl_k),\qquad \ell = -k+ \frac{k + n}{k + n +1}.\end{equation} In particular, $\cC_k(n)$ is {\it strongly rational}, that is, $C_2$-cofinite and rational. This generalizes the case $n=1$, which was proved earlier in \cite{ALY}.

A useful perspective on $\cC^k(n)$ is that these algebras all arise in a uniform way as quotients of the universal two-parameter $\cW_{\infty}$-algebra $\cW(c,\lambda)$ of type $\cW(2,3,\dots)$; see Theorem 8.2 of \cite{LIII}. This realization gives a nice conceptual explanation for the isomorphisms appearing in \eqref{typeAcoinc}. Each one-parameter quotient of $\cW(c,\lambda)$ corresponds to an ideal in $\mathbb{C}[c,\lambda]$, or equivalently, a curve in the parameter space $\mathbb{C}^2$ called the {\it truncation curve}. The truncation curves for $\cW^{\ell}(\gs\gl_m)$ and $\cC^k(n)$ are given by Equations 7.8 and 8.4 of \cite{LIII}, and the above isomorphisms correspond to intersection points on these curves.

The algebras $\cC^k(n)$ appear naturally as building blocks for affine vertex algebras of type $A$. It is convenient to replace $\cC^k(n)$ with $\tilde{\cC}^k(n) = \cH\otimes \cC^k(n)$, where $\cH$ is a rank one Heisenberg vertex algebra. Then we have $$\text{Com}(V^k(\gg\gl_{n-1}), V^k(\gg\gl_n)) \cong \tilde{\cC}^k(n-1),$$ so $V^k(\gg\gl_n)$ can be regarded as an extension of $V^k(\gg\gl_{n-1}) \otimes \tilde{\cC}^k(n-1)$. Iterating this procedure, we see that $V^k(\gg\gl_n)$ is an extension of 
\begin{equation} \label{gelfandts} \cH \otimes \tilde{\cC}^k(1)\otimes \tilde{\cC}^k(2) \otimes \cdots \otimes \tilde{\cC}^k(n-1).\end{equation} Note that if $k$ is a positive integer, the simple quotient $L_k(\gg\gl_n)$ is then an extension of 
 $$\cH \otimes \tilde{\cC}_k(1) \otimes \tilde{\cC}_k(2) \otimes \cdots \otimes \tilde{\cC}_k(n-1) \cong \cW_{\ell_1} (\gg\gl_k) \otimes \cW_{\ell_2} (\gg\gl_k)\otimes \cdots \otimes \cW_{\ell_n} (\gg\gl_k),$$ where $\ell_i = -k + \frac{k+n-i}{k+n+1-i}$. In \cite{ACL}, this was regarded as a noncommutative analogue of the Gelfand-Tsetin subalgebra of $U(\gg\gl_n)$. Similarly, we may regard the subalgebra  \eqref{gelfandts} as the universal version of this structure.

The algebras $\cC^k(n)$ also appear as building blocks for various $\cW$-(super)algebras. For example, an important conjecture of Ito \cite{I} asserts that the principal $\cW$-algebra $\cW^{\ell}(\gs\gl_{n+1|n})$ has a coset realization as
\begin{equation} \label{kazamasuzuki} \text{Com}(V^{k+1}(\gg\gl_n), V^k(\gs\gl_{n+1}) \otimes \cF(2n)),\end{equation} where $\cF(2n)$ denotes the rank $2n$ free fermion algebra, and $(\ell + 1)(k+n+1) = 1$. Ito's conjecture was stated in this form in \cite{CLII}, and these algebras have the same strong generating type by Lemma 7.12 of \cite{CLII}. In the case $n=1$, the conjecture clearly holds because both sides are isomorphic to the $N=2$ superconformal algebra. The first nontrivial case $n=2$ was proven in \cite{GL}. It was also shown in \cite{GL} that the coset \eqref{kazamasuzuki} is naturally an extension of $\cW^r(\gg\gl_n) \otimes \cC^k(n)$ for $r = -n + \frac{n+k}{n +k+1}$. An important ingredient in the proof of Ito's conjecture will be to show that $\cW^{\ell}(\gs\gl_{n+1|n})$ is indeed an extension of $\cW^r(\gg\gl_n) \otimes \cC^k(n)$. Note that $\cC^k(n)$ is itself a subalgebra of a $\cW$-superalgebra of $\gs\gl_{n+1|n}$ corresponding to a \emph{small hook-type} nilpotent element \cite{CLIII}.

\subsection{Generalized parafermion algebras of orthogonal type}
There are two different analogues of $\cC^k(n)$ in the orthogonal setting. We have natural embeddings $\gs\go_{2n}\hookrightarrow \gs\go_{2n+1} \hookrightarrow \gs\go_{2n+2}$, which induce homomorphisms of affine vertex algebras
\begin{equation} \label{embeddingBD} V^k(\gs\go_{2n}) \hookrightarrow V^k(\gs\go_{2n+1}) \hookrightarrow V^k(\gs\go_{2n+2}).\end{equation}
The cosets $\text{Com}(V^k(\gs\go_{2n}), V^k(\gs\go_{2n+1}))$ and $\text{Com}(V^k(\gs\go_{2n+1}), V^k(\gs\go_{2n+2}))$ both have actions of $\mathbb{Z}_2$, and we define 
 \begin{equation} \label{def:bdcosets} \cD^k(n) = \text{Com}(V^k(\gs\go_{2n}), V^k(\gs\go_{2n+1}))^{\mathbb{Z}_2},\quad  \cE^k(n) = \text{Com}(V^k(\gs\go_{2n+1}), V^k(\gs\go_{2n+2}))^{\mathbb{Z}_2}.\end{equation}
 Both these algebras arise as one-parameter quotients of the universal even spin $\cW_{\infty}$-algebra $\cW^{\rm ev}(c,\lambda)$ constructed recently by Kanade and the third author in \cite{KL}. Such quotients of $\cW^{\rm ev}(c,\lambda)$ are in bijection with a family of ideals $I$ in the polynomial ring $\mathbb{C}[c,\lambda]$, or equivalently, the truncation curves $V(I) \subseteq \mathbb{C}^2$. The main result in this paper is the explicit description of the truncation curve for $\cD^k(n)$ for all $n$; see Theorem \ref{main:realization}. The proof is based on the coset realization of principal $\cW$-algebras of type $D$ and a certain level-rank duality appearing in \cite{ACL}, which implies that
\begin{equation} \label{typeDcoinc} \cD_{2m}(n) \cong \cW_{\ell}(\gs\go_{2m})^{\mathbb{Z}_2},\qquad \ell = -(2m-2) + \frac{2m+2n-2}{2m+2n-1}.\end{equation} Here $\cD_{2m}(n)$ denotes the simple quotient of $\cD^{2m}(n)$. This is analogous to the isomorphisms \eqref{typeAcoinc} in type $A$. Since a similar coset realization of type $B$ principal $\cW$-algebras is not  available, we are currently unable to obtain an explicit description of $\cE^k(n)$, and in this paper we only study $\cD^k(n)$.

As in type $A$, there is a similar description of affine vertex algebras of orthogonal type  as extensions of Gelfand-Tsetlin type subalgebras. Clearly $V^k(\gs\go_{2n+2})$ is an extension of $$\cH \otimes \cD^k(1) \otimes \cE^k(1)  \otimes \cD^k(2) \otimes \cE^k(2) \otimes \cdots \otimes \cD^k(n-1)\otimes \cE^k(n-1)\otimes \cD^k(n)\otimes \cE^k(n),$$ and similarly, $V^k(\gs\go_{2n+1})$ is an extension of 
$$\cH \otimes \cD^k(1) \otimes \cE^k(1)  \otimes \cD^k(2) \otimes \cE^k(2) \otimes \cdots \otimes \cD^k(n-1)\otimes \cE^k(n-1) \otimes \cD^k(n).$$
Additionally, $\cD^k(n)$ is a building block for various $\cW$-(super)algebras. For example, 
consider the principal $\cW$-superalgebra $\cW^\ell(\go\gs\gp_{2n|2n})$ where $(\ell + 1)(k+2n-1) =1$. Note that $1$ and $2n-1$ are the dual Coxeter numbers of $\go\gs\gp_{2n|2n}$ and $\gs\go_{2n+1}$, respectively. The free fermion algebra $\cF(2n)$ carries an action of $L_1(\gs\go_{2n})$, and it is expected that
\begin{equation} \label{kazamasuzukiBD} \cW^\ell(\go\gs\gp_{2n|2n}) \cong \text{Com}(V^{k+1}(\gs\go_{2n}), V^k(\gs\go_{2n+1}) \otimes \cF(2n)).\end{equation} This algebra appears in physics in the duality of $N=1$ superconformal field theories and higher spin supergravities \cite{CHR, CV}, and this conjecture appeared in this context. Note that central charges coincide. It is apparent that the coset appearing in \eqref{kazamasuzukiBD} is an extension of $\cW^r(\gs\go_{2n}) \otimes \cD^k(n)$ where $r = -(2n-2) + \frac{k+2n-2}{k+2n-1}$. As in the case of Ito's conjecture, an important step in the proof of \eqref{kazamasuzukiBD} will be to show that $\cW^\ell(\go\gs\gp_{2n|2n})$ is also an extension of this structure.

\subsection{Applications} The first application of our main result is to classify all isomorphisms between the simple quotient $\cD_k(n)$ and the simple algebras $\cW_{\ell}(\gs\go_{2m+1})$ and $\cW_{\ell}(\gs\go_{2m})^{\mathbb{Z}_2}$. Using results of \cite{KL}, this can be achieved by finding the intersection points between the truncation curve for $\cD^k(n)$, and the truncation curves for $\cW^{\ell}(\gs\go_{2m+1})$ and $\cW^{\ell}(\gs\go_{2m})^{\mathbb{Z}_2}$, respectively. In the type $A$ case, we find only one family of points where $\cC_k(n)$ is isomorphic to a strongly rational $\cW$-algebra of type $A$; these appear in \eqref{typeAcoinc}. In the orthogonal setting, the situation is more interesting. In addition to the isomorphisms \eqref{typeDcoinc} when $k$ is a positive integer, we also find that for $k = -(2n-2) + \frac{1}{2} (2 n + 2 m -1)$, we have an embedding of simple affine vertex algebras $L_k(\gs\go_{2n}) \rightarrow L_k(\gs\go_{2n+1})$, and an isomorphism
$$\cD_k(n) = \text{Com}(L_k(\gs\go_{2n}) , L_k(\gs\go_{2n+1}))^{\mathbb{Z}_2} \cong \cW_{\ell}(\gs\go_{2m+1}),\qquad \displaystyle  \ell = -(2m-1) + \frac{2 m + 2 n -1}{2 m + 2 n+1}.$$ Since $\ell$ is a nondegenerate admissible level for $\gs\go_{2m+1}$, $\cW_{\ell}(\gs\go_{2m+1})$ is strongly rational \cite{ArI,ArII}. These are new examples of cosets of non-rational vertex algebras by admissible level affine vertex algebras, which are strongly rational.

This coset is also closely related to level-rank duality. Recall that $2n(2m+1)$ free fermions carry an action of $L_{2n}(\gs\go_{2m+1}) \otimes L_{2m+1}(\gs\go_{2n})$. The levels shifted by the respective dual Coxeter numbers are $2n+2m-1$ in both cases. 
Therefore $L_k(\gs\go_{2n+1})$ is an extension of $L_k(\gs\go_{2n}) \otimes  \cW_{\ell}(\gs\go_{2m+1})$, where 
$\ell = -(2m-1) + \frac{2 m + 2 n -1}{2 m + 2 n+1}$, i.e., both levels $k$ and $\ell$ shifted by the respective dual Coxeter numbers are of the form $(2m+2n-1)/v$ for $v =2$ and $v = 2 + 2m+2n-1$. In particular, the shifted levels have the same numerator as the original level-rank duality and the two denominators only differ by a multiple of the numerator. Note that under certain vertex tensor category assumptions the tensor product of two vertex algebras can be extended to a larger vertex algebra with a certain multiplicity freeness condition if and only if the two vertex algebras have subcategories that are braid-reversed equivalent, see \cite[Main Thm. 3]{CKM2} for the precise statement. Applied to our setting, this means that there are vertex algebra extensions of $L_k(\gs\go_{2n})$ and $\cW_{\ell}(\gs\go_{2m+1})$ that have subcategories of modules that are braid-reversed equivalent. 

The theory of vertex algebra extensions, especially \cite[Thm. 5.12]{CKM2}, then implies that the category of ordinary modules of $L_k(\gs\go_{2n+1})$ at 
level $k = -(2n-2) + \frac{1}{2}(2n+2m-1)$ is fusion, i.e. a rigid braided semisimple tensor category. This proves special cases of Conjecture 1.1 of \cite{CHY}.  

Finally, our rationality results for $\cD_k(n)$ suggest the existence of a new series of principal $\cW$-superalgebras of $\go\gs\gp_{2n|2n}$ which are strongly rational. By Corollary 14.2 of \cite{ACL}, the coset $\text{Com}(L_{k+1}(\gs\go_{2n}), L_k(\gs\go_{2n+1}) \otimes \cF(2n))$ is strongly rational when $k$ is a positive integer. In view of the conjectured isomorphism \eqref{kazamasuzukiBD}, this implies that for $k$ a positive integer and $\ell$ satisfying $(\ell + 1)(k+2n-1) =1$, $\cW_\ell(\go\gs\gp_{2n|2n})$ is strongly rational. Similarly, it follows from Corollary 1.1 of \cite{CKM2} that for $k = -(2n-2) + \frac{1}{2} (2 n + 2 m -1)$ and $\ell$ satisfying $(\ell + 1)(k+2n-1) =1$, the coset $\text{Com}(L_{k+1}(\gs\go_{2n}), L_k(\gs\go_{2n+1}) \otimes \cF(2n))$ is again strongly rational. This motivates the following 
\begin{conj} For $k = -(2n-2) + \frac{1}{2} (2 n + 2 m -1)$ and $\ell$ satisfying $(\ell + 1)(k+2n-1) =1$,  $\cW_\ell(\go\gs\gp_{2n|2n})$ is strongly rational.
\end{conj}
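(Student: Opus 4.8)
The plan is to deduce the conjecture from the two strong rationality results quoted just before its statement, together with the conjectured isomorphism \eqref{kazamasuzukiBD}, once that isomorphism is established; so the proof is essentially a reduction modulo \eqref{kazamasuzukiBD}. First I would recall that by Corollary 14.2 of \cite{ACL} and the corollary of \cite{CKM2} cited above, the coset $\text{Com}(L_{k+1}(\gs\go_{2n}), L_k(\gs\go_{2n+1}) \otimes \cF(2n))$ is strongly rational in both relevant regimes: $k$ a positive integer, and $k = -(2n-2) + \frac{1}{2}(2n+2m-1)$. Indeed in the latter case one combines the embedding $L_k(\gs\go_{2n}) \hookrightarrow L_k(\gs\go_{2n+1})$ and the strong rationality of $\cC_k(n)$ established in the Applications section with the strong rationality of $\cF(2n)$ and the fact that $\cF(2n)$ carries an action of $L_1(\gs\go_{2n})$, so that $L_k(\gs\go_{2n+1}) \otimes \cF(2n)$ is an extension of $L_{k+1}(\gs\go_{2n})$ tensored with the coset; the coset is then strongly rational because it is an extension of $\cW_\ell(\gs\go_{2m+1}) \otimes \cF(2n)$ (or a finite extension thereof), both factors being strongly rational, and strong rationality is preserved under such extensions by the standard results of Huang--Kirillov--Lepowsky and McRae.

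Next, assuming the isomorphism \eqref{kazamasuzukiBD}, namely $\cW^\ell(\go\gs\gp_{2n|2n}) \cong \text{Com}(V^{k+1}(\gs\go_{2n}), V^k(\gs\go_{2n+1}) \otimes \cF(2n))$ with $(\ell+1)(k+2n-1) = 1$, one passes to simple quotients: the simple principal $\cW$-superalgebra $\cW_\ell(\go\gs\gp_{2n|2n})$ is then isomorphic to $\text{Com}(L_{k+1}(\gs\go_{2n}), L_k(\gs\go_{2n+1}) \otimes \cF(2n))$. Here one needs that the coset of the simple affine vertex algebras coincides with the simple quotient of the universal coset; this is a decomplete/simplicity argument of the kind used throughout \cite{ACL, ALY, CLII} (one checks that the coset of the simple algebras has no proper ideals, typically using that it embeds in the full tensor product and that the extension structure is that of simple modules). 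Combining this identification with the strong rationality of the coset from the first step immediately yields strong rationality of $\cW_\ell(\go\gs\gp_{2n|2n})$ for $k = -(2n-2) + \frac{1}{2}(2n+2m-1)$ and $\ell$ as specified.

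The main obstacle is of course that \eqref{kazamasuzukiBD} is at present a conjecture, so this line of argument is conditional; the conjecture is stated precisely as such. The realistic unconditional content I would extract is: (i) a clean proof of the strong rationality of $\text{Com}(L_{k+1}(\gs\go_{2n}), L_k(\gs\go_{2n+1}) \otimes \cF(2n))$ at the admissible level $k = -(2n-2) + \frac{1}{2}(2n+2m-1)$, which follows from the main results of this paper plus \cite{CKM2}; and (ii) the remark that $\cW^\ell(\go\gs\gp_{2n|2n})$ is, unconditionally, an extension of $\cW^r(\gs\go_{2n}) \otimes \cD^k(n)$ with $r = -(2n-2) + \frac{k+2n-2}{k+2n-1}$, so that proving \eqref{kazamasuzukiBD} reduces to matching this extension structure on the two sides — the same reduction that works for Ito's conjecture in type $A$. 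Thus the conjecture would follow from establishing \eqref{kazamasuzukiBD}, and the strong-rationality input is already in hand; I would present the argument in this conditional form, flagging \eqref{kazamasuzukiBD} as the only missing ingredient.
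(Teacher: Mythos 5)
The statement you are addressing is presented in the paper as a conjecture and is not proved there: the paper's own discussion preceding it is precisely the conditional reduction you describe, namely that $\text{Com}(L_{k+1}(\gs\go_{2n}), L_k(\gs\go_{2n+1}) \otimes \cF(2n))$ is strongly rational at these levels by Corollary 1.1 of \cite{CKM2} (and by Corollary 14.2 of \cite{ACL} for positive integer $k$), so that strong rationality of $\cW_\ell(\go\gs\gp_{2n|2n})$ would follow from the conjectural isomorphism \eqref{kazamasuzukiBD} together with a passage to simple quotients. Your proposal therefore takes essentially the same route as the paper and correctly identifies \eqref{kazamasuzukiBD} as the sole missing ingredient; like the paper, it is conditional and does not constitute an unconditional proof, which the paper notes is known only for $n=1$.
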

The conjecture is true for the $N=2$ super Virasoro algebra, i.e. the case $n=1$ \cite{Ad}. Otherwise strong rationality for principal $\cW$-superalgebras of orthosymplectic type is completely open. There is, however, a $C_2$-cofiniteness results in the case of $\go\gs\gp_{2|2n}$ \cite[Cor. 5.19]{CGN}.

\section{Vertex algebras}
We shall assume that the reader is familiar with vertex algebras, and we use the same notation and terminology as the papers \cite{LIII,KL}. We first recall the universal two-parameter vertex algebra $\cW^{\text{ev}}(c,\lambda)$ of type $\cW(2,4,\dots)$, which was recently constructed in \cite{KL}. It is defined over the polynomial ring $\mathbb{C}[c,\lambda]$ and is generated by a Virasoro field $L$ of central charge $c$, and a weight $4$ primary field $W^4$, and is strongly generated by fields $\{L, W^{2i}|\ i \geq 2\}$ where $W^{2i} = W^4_{(1)} W^{2i-2}$ for $i\geq 3$. The idea of the construction is as follows. 
\begin{enumerate}
\item All structure constants in the OPEs of $L(z) W^{2i}(w)$ and $W^{2j}(z) W^{2k}(w)$ for $2i \leq 12$ and $2j+2k \leq 14$, are uniquely determined as elements of $\mathbb{C}[c,\lambda]$ by imposing the Jacobi identities among these fields. 
\item This data uniquely and recursively determines all OPEs $L(z) W^{2i}(w)$ and $W^{2j}(z) W^{2k}(w)$ over the ring $\mathbb{C}[c,\lambda]$ if a certain subset of Jacobi identities are imposed. 
\item By showing that the algebras $\cW^k(\gs\gp_{2m})$ all arise as one-parameter quotients of $\cW^{\text{ev}}(c,\lambda)$ after a suitable localization, we show that all Jacobi identities hold. Equivalently, $\cW^{\text{ev}}(c,\lambda)$ is freely generated by the fields $\{L, W^{2i}|\ i \geq 2\}$, and is the universal enveloping algebra of the corresponding nonlinear Lie conformal algebra \cite{DSK}.
\end{enumerate}

$\cW^{\mathrm{ev}}(c,\lambda)$ is simple as a vertex algebra over $\mathbb{C}[c,\lambda]$, but there is a certain discrete family of prime ideals $I = (p(c,\lambda)) \subseteq \mathbb{C}[c,\lambda]$ for which the quotient 
$$\cW^{{\rm ev}, I}(c,\lambda) = \cW^{\mathrm{ev}}(c,\lambda)/ I \cdot \cW^{\mathrm{ev}}(c,\lambda),$$ is not simple as a vertex algebra over the ring $\mathbb{C}[c,\lambda] / I$. We denote by $\cW^{\mathrm{ev}}_I(c,\lambda)$ the simple quotient of $\cW^{{\rm ev}, I}(c,\lambda)$ by its maximal proper graded ideal $\cI$. After a suitable localization, all one-parameter vertex algebras of type $\cW(2,4,\dots, 2N)$ for some $N$ satisfying some mild hypotheses, can be obtained as quotients of $\cW^{\mathrm{ev}}(c,\lambda)$ in this way. This includes the principal $\cW$-algebras $\cW^k(\gs\go_{2m+1})$ and the orbifolds $\cW^k(\gs\go_{2m})^{\mathbb{Z}_2}$. The generators $p(c,\lambda)$ for such ideals arise as irreducible factors of Shapovalov determinants, and are in bijection with such one-parameter vertex algebras. 

We also consider $\cW^{\mathrm{ev},I}(c,\lambda)$ for maximal ideals $$I = (c- c_0, \lambda- \lambda_0),\qquad c_0, \lambda_0\in \mathbb{C}.$$
Then $\cW^{\mathrm{ev},I}(c,\lambda)$ and its quotients are vertex algebras over $\mathbb{C}$. Given maximal ideals $I_0 = (c- c_0, \lambda- \lambda_0)$ and $I_1 = (c - c_1, \lambda - \lambda_1)$, let $\cW_0$ and $\cW_1$ be the simple quotients of $\cW^{\mathrm{ev},I_0}(c,\lambda)$ and $\cW^{\mathrm{ev},I_1}(c,\lambda)$. Theorem 8.1 of \cite{KL} gives a simple criterion for $\cW_0$ and $\cW_1$ to be isomorphic. Aside from a few degenerate cases, we must have $c_0 = c_1$ and $\lambda_0 = \lambda_1$. This implies that aside from the degenerate cases, all other coincidences among the simple quotients of one-parameter vertex algebras $\cW^{\mathrm{ev},I}(c,\lambda)$ and $\cW^{\mathrm{ev},J}(c,\lambda)$, correspond to intersection points of their truncation curves $V(I)$ and $V(J)$.

We shall need the following result which is analogous to Theorem 6.2 of \cite{LIII}.

\begin{thm} \label{refinedsimplequotient} Let $\cW$ be a vertex algebra of type $\cW(2,4,\dots, 2N)$ which is defined over some localization $R$ of $\mathbb{C}[c,\lambda] / I$, for some prime ideal $I$. Suppose that $\cW$ is generated by the Virasoro field $L$ and a weight $4$ primary field $W^4$. If in addition, the graded character of $\cW$ agrees with that of $\cW^{\rm ev}(c,\lambda)$ up to weight $13$, then $\cW$ is a quotient of $\cW^I(c,\lambda)$ after localization.
\end{thm}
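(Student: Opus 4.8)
The plan is to adapt the proof of Theorem 6.2 of \cite{LIV} to the even-spin setting. The starting point is the universal property built into the construction of $\cW^{\mathrm{ev}}(c,\lambda)$: by items (1)--(2) in the construction recalled above, once one knows the OPEs $L(z)W^{2i}(w)$ and $W^{2j}(z)W^{2k}(w)$ for $2i \leq 12$ and $2j+2k \leq 14$ (equivalently, all OPEs involving generators up to weight roughly $13$), the full vertex algebra structure of any type $\cW(2,4,\dots)$ algebra generated by a Virasoro field $L$ and a weight $4$ primary $W^4$ is determined, provided the relevant Jacobi identities hold. So first I would use the hypothesis that $\cW$ is generated by $L$ and a weight $4$ primary $W^4$ to produce, via the standard normal-ordering and rescaling argument, a candidate identification of low-weight generators of $\cW$ with the images of $L, W^4, W^6, \dots$ of $\cW^{\mathrm{ev}}(c,\lambda)$; the normalization of $W^4$ is fixed up to a sign (or a scalar) by requiring that its OPE with itself match the normalization used in \cite{KL}, which is possible after passing to a further localization of $R$.

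Next I would compare structure constants. Since $\cW$ is defined over a localization $R$ of $\mathbb{C}[c,\lambda]/I$, its structure constants are elements of $R$, and the map $\mathbb{C}[c,\lambda] \to R$ sends each universal structure constant of $\cW^{\mathrm{ev}}(c,\lambda)$ to the corresponding structure constant of $\cW$. The key input is the hypothesis on graded characters: the graded character of $\cW$ agrees with that of $\cW^{\mathrm{ev}}(c,\lambda)$ up to weight $13$. This guarantees that through weight $13$ there are no ``unexpected'' null vectors or extra generators in $\cW$, so that the weight-by-weight reconstruction of OPEs in $\cW$ runs in exactly the same way as in the universal algebra: every OPE coefficient of $\cW$ that needs to be determined is forced, and it is forced by the image under $\mathbb{C}[c,\lambda]\to R$ of the corresponding universal coefficient. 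In particular the assignment $L \mapsto L$, $W^4 \mapsto W^4$ extends to a vertex algebra homomorphism $\cW^{\mathrm{ev},I}(c,\lambda) \otimes_{\mathbb{C}[c,\lambda]/I} R \to \cW$; since $\cW$ is generated by $L$ and $W^4$, this homomorphism is surjective, which is exactly the statement that $\cW$ is a quotient of $\cW^I(c,\lambda)$ after localization.

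To make the reconstruction argument precise I would isolate the following intermediate claim, parallel to what is done in \cite{LIV}: in $\cW^{\mathrm{ev}}(c,\lambda)$, the OPEs of all strong generators are determined by the finite OPE data among fields of weight $\leq 13$ together with the Jacobi identities, and the only place where this determination could fail in a given type $\cW(2,4,\dots,2N)$ quotient is if the character drops below that of $\cW^{\mathrm{ev}}(c,\lambda)$ in some weight $\leq 13$, forcing a relation among the low-weight fields that is not a consequence of $I$. The character hypothesis rules this out. I would then note that $W^{2i}$ for $i \geq 4$ is defined in both $\cW$ and $\cW^{\mathrm{ev}}(c,\lambda)$ by the same iterated formula $W^{2i} = W^4_{(1)}W^{2i-2}$, so the homomorphism automatically respects the full strong generating set, and the decoupling relations expressing higher $W^{2i}$ in terms of $L, W^4$ (which exist in the type $\cW(2,4,\dots,2N)$ quotient) are the images of the corresponding relations defining the ideal giving $\cW^I(c,\lambda)$.

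The main obstacle I expect is bookkeeping around the precise weight bound and the normalization of $W^4$: one must check that weight $13$ (rather than some higher weight) genuinely suffices, i.e.\ that all Jacobi identities needed to propagate the OPEs are supported on fields of total weight $\leq 13$, and that the sign/scalar ambiguity in identifying $W^4$ does not obstruct the homomorphism --- this is handled exactly as in \cite{KL}, where the weight $12$ and $14$ bounds in the construction dictate the weight $13$ threshold here, and where the automorphism $W^4 \mapsto -W^4$ of $\cW^{\mathrm{ev}}(c,\lambda)$ absorbs the sign ambiguity. Everything else is a routine transcription of the type $\cW(2,3,\dots)$ argument of \cite{LIV} to the even-spin case, using \cite{KL} in place of the corresponding results on $\cW(c,\lambda)$.
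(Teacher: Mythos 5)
Your proposal is correct and follows essentially the same route as the paper: the paper invokes Theorem 3.10 of \cite{KL} (noting it holds without a simplicity hypothesis) to reduce the claim to matching the OPEs $L(z)W^{2i}(w)$ and $W^{2j}(z)W^{2k}(w)$ for $2i\leq 12$, $2j+2k\leq 14$, and then observes that the graded character assumption forces this matching because it rules out null vectors of weight $\leq 13$ in the underlying nonlinear conformal algebra. Your ``reconstruction from low-weight OPE data plus no unexpected null vectors'' argument is exactly this, just spelled out in more detail.
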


\begin{proof} First, note that Theorem 3.10 of \cite{KL} holds without the simplicity assumption; see Remark 5.1 of \cite{LIII} for a similar statement in the case of the algebra $\cW(c,\lambda)$ of type $\cW(2,3,\dots)$. By Theorem 3.10 of \cite{KL}, it suffices to prove that the OPEs $L(z) W^{2i}(w)$ and $W^{2j}(z) W^{2k}(w)$ for $2i \leq 12$ and $2j+2k \leq 14$ in $\cW$ are the same as the corresponding OPEs in $\cW^{\rm ev}(c,\lambda)$ if the structure constants are replaced with their images in $R$. In this notation, $W^{2i}= W^4_{(1)} W^{2i-2}$ for $i\geq 3$. But this is automatic because the graded character assumption implies that there are no null vectors of weight $w\leq 13$ in the (possibly degenerate) nonlinear conformal algebra corresponding to $\{L, W^{2i}|\ 2 \leq i \leq N\}$. \end{proof}

\section{Generalized parafermions of orthogonal type} \label{section:genpara}

For $n\geq 1$, the natural embedding $\gs\go_{2n} \hookrightarrow \gs\go_{2n+1}$ induces a vertex algebra homomorphism $$V^k(\gs\go_{2n})  \rightarrow V^k(\gs\go_{2n+1}).$$ The action of $\gs\go_{2n}$ on $V^k(\gs\go_{2n+1})$ given by the zero modes of the generating fields integrates to an action of the orthogonal group $\text{O}_{2n}$. Therefore the coset 
$$\text{Com}(V^k(\gs\go_{2n}), V^k(\gs\go_{2n+1})) =  V^k(\gs\go_{2n+1})^{\gs\go_{2n}[t]}$$ has a nontrivial action of $\mathbb{Z}_2$. We define
\begin{equation} \cD^k(n) =\text{Com}(V^k(\gs\go_{2n}), V^k(\gs\go_{2n+1}))^{\mathbb{Z}_2}.\end{equation} 
It has Virasoro element $L^{\gs\go_{2n+1}} - L^{\gs\go_{2n}}$ with central charge 
\begin{equation} \label{ccgenpar} c  = \frac{k n (2 k + 2 n -3)}{(k + 2 n -2) (k + 2 n -1)} .\end{equation}
Note that in the case $n=1$, $\cD^k(n) \cong N^k(\gs\gl_2)^{\mathbb{Z}_2}$ which is of type $\cW(2,4,6,8,10)$ by Theorem 10.1 of \cite{KL}.

\begin{lemma} \label{stronggentype} For all $n\geq 1$, $\cD^k(n)$ is of type $\cW(2,4,\dots,2N)$ for some $N$ satisfying $N \geq 2n^2+3n$. We conjecture, but do not prove, that $N = 2n^2+3n$. Moreover, for generic values of $k$, $\cD^k(n)$ is generated by the weight $4$ primary field $W^4$. \end{lemma}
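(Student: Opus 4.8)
The plan is to compute the graded character of $\cD^k(n)$ for generic $k$ and read off the strong generating type from it, then use a classical invariant-theory argument to bound $N$ from below, and finally verify the single-generator statement by checking that $W^4$ generates the correct Zhu-type associated graded object.

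First I would identify $\cD^k(n)$ as a subalgebra of a free field algebra. The coset $\text{Com}(V^k(\gs\go_{2n}), V^k(\gs\go_{2n+1}))$ sits inside $V^k(\gs\go_{2n+1})$, and the key point is that as a module over $V^k(\gs\go_{2n}) \otimes \cD^k(n)$ (or really over $\gs\go_{2n}[t]$ together with the coset Virasoro), $V^k(\gs\go_{2n+1})$ decomposes in a way governed by the branching $\gs\go_{2n+1} \downarrow \gs\go_{2n}$, whose complement is the $2n$-dimensional standard (vector) representation. So for generic $k$ the coset is the $\gs\go_{2n}[t]$-invariants of (a limit of) the $\beta\gamma$-type or rather the affine vertex algebra attached to $2n$ copies of the standard representation; concretely, one studies $\Com(V^k(\gs\go_{2n}), V^k(\gs\go_{2n}) \otimes \text{(vector part)})$ and takes $\mathbb{Z}_2$-invariants. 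By the standard deformation argument (as in \cite{LIV}), the graded character is independent of $k$ for generic $k$, so it equals the character of the classical limit, which is the arc space / jet-scheme invariant ring $\big(\C[\text{jets of } (\C^{2n})^{\oplus r}]\big)^{\text{O}_{2n}}$ for the appropriate number $r$ of copies — and by the first and second fundamental theorems of invariant theory for $\text{O}_{2n}$ acting on copies of the standard representation, together with their jet-space refinements, this invariant ring is generated by the quadratic invariants $\langle x_i, x_j\rangle$, whose generating function produces exactly a character of the form $\prod_{m\geq 1}\prod (1-q^m)^{-1}$ indexed by the weights $2,4,6,\dots$. This shows $\cD^k(n)$ is of type $\cW(2,4,\dots,2N)$ for some $N$.

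Next I would pin down the inequality $N \geq 2n^2 + 3n$. For this I would compute enough terms of the character — equivalently, count $\text{O}_{2n}$-invariants in the jet scheme in low weight — to detect a new strong generator in weight $2n^2+3n$ that is not a normally ordered polynomial in the lower generators. The cleanest way is to produce, in weight $2n^2+3n$, an invariant corresponding to a determinant-type (or Pfaffian-type) relation among the standard-representation vectors and their derivatives that is not expressible via the quadratics and their derivatives of lower weight; its appearance forces $N$ to be at least that large. This is parallel to Theorem 8.1 of \cite{LIV} in type $A$, where the analogous bound $n^3+3n+1$ (resp. the corrected formula) comes from precisely such a determinantal invariant, and I would follow that template. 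For the single-generator claim for generic $k$: once we know the type is $\cW(2,4,\dots,2N)$, I would show that the vertex subalgebra generated by $L$ and $W^4$ already has the right character in every weight, using the recursive structure $W^{2i} = W^4_{(1)} W^{2i-2}$ available inside $\cW^{\text{ev}}(c,\lambda)$ together with Theorem \ref{refinedsimplequotient}: it suffices to exhibit that $\cD^k(n)$ is a quotient of $\cW^I(c,\lambda)$ (after localization), which follows once its character agrees with that of $\cW^{\text{ev}}(c,\lambda)$ through weight $13$ and its weight-$4$ space contains a primary field; then the generators $W^{2i}$ for $i\geq 3$ are automatically obtained by iterated $W^4_{(1)}$-products, so $W^4$ alone generates over generic $k$.

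The main obstacle I expect is the character computation forcing the exact lower bound $N \geq 2n^2+3n$ uniformly in $n$: one must rule out that the putative new generator in weight $2n^2+3n$ is in fact decomposable, which requires controlling all normally ordered relations among $\{L, W^4, W^6, \dots\}$ in that weight, or equivalently a precise understanding of the $\text{O}_{2n}$-invariant theory of the jet scheme of copies of the vector representation in the relevant degree — the second fundamental theorem for $\text{O}_{2n}$ contributes Gram-determinant relations of size $2n+1$, and disentangling their jet-space consequences from genuinely new generators is the delicate part. The single-generator statement, by contrast, should be comparatively routine once the character is known through weight $13$, since it reduces to invoking Theorem \ref{refinedsimplequotient} and the freeness of $\cW^{\text{ev}}(c,\lambda)$.
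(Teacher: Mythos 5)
Your overall strategy for the strong generating type is the same as the paper's: pass to the large-level limit of the coset, identify it with the $\text{O}_{2n}$-orbifold of the free field algebra attached to the $2n$-dimensional complement of $\gs\go_{2n}$ in $\gs\go_{2n+1}$ (the paper cites Theorem 6.10 of \cite{CLII} to get $\lim_{k\to\infty}\cD^k(n)\cong \cH(2n)^{\text{O}_{2n}}$, the rank $2n$ Heisenberg orbifold -- note this is \emph{one} copy of the standard representation, not ``$2n$ copies''), apply Weyl's fundamental theorems to the jet/arc-space invariants, and deform back to generic $k$. The paper simply cites Theorem 6.5 of \cite{LII} for the type of $\cH(2n)^{\text{O}_{2n}}$ and Lemma 4.2 of \cite{LI} for generation by the weight $2$ and $4$ fields, so your plan to redo this invariant theory is more work but not wrong in principle. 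However, there are two genuine problems. First, a bookkeeping error that would give the wrong bound: the type is $\cW(2,4,\dots,2N)$, so proving $N\geq 2n^2+3n$ requires exhibiting an indecomposable strong generator in conformal weight $2N=4n^2+6n$, not in weight $2n^2+3n$ as you propose -- for odd $n$ that weight is odd and contains no generator at all (e.g.\ $n=1$: the type is $\cW(2,4,6,8,10)$, so $N=5$ and the top generator sits in weight $10$, not $5$). You are importing the type $A$ convention from Theorem 8.1 of \cite{LIV}, where the label $n^3+3n+1$ \emph{is} the top weight; here it is half of it, and an indecomposable in weight $2n^2+3n$ would only yield $N\geq n^2+\tfrac{3n}{2}$.

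Second, your argument for generation by $W^4$ is circular. Theorem \ref{refinedsimplequotient} takes as a \emph{hypothesis} that $\cW$ is generated by $L$ and a weight $4$ primary; you cannot invoke it to conclude that $\cD^k(n)$ is a quotient of $\cW^{{\rm ev},I}(c,\lambda)$ and then read off generation from the recursion $W^{2i}=W^4_{(1)}W^{2i-2}$, because establishing that $\cD^k(n)$ is such a quotient is exactly what Lemma \ref{stronggentype} is needed for (it feeds into Corollary \ref{cor:gp}, which is where Theorem \ref{refinedsimplequotient} is applied). The paper avoids this by proving generation directly in the free-field limit: $\cH(2n)^{\text{O}_{2n}}$ is generated by its weight $2$ and $4$ fields by Lemma 4.2 of \cite{LI}, in fact by the weight $4$ field alone, and this property passes to $\cD^k(n)$ for generic $k$ by the deformation argument of Corollary 8.6 of \cite{CLI}. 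Note also that even if your appeal to Theorem \ref{refinedsimplequotient} were legitimate, it would only give generation by $L$ together with $W^4$, whereas the lemma asserts the stronger statement that $W^4$ alone suffices.
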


\begin{proof} By Theorem 6.10 of \cite{CLII}, we have $$\lim_{k\rightarrow \infty} \cD^k(n) \cong \cH(2n)^{\text{O}_{2n}},$$ and a strong generating set for $\cH(2n)^{\text{O}_{2n}}$ corresponds to a strong generating set for $\cD^k(n)$ for generic values of $k$. Here $\cH(2n)$ denotes the rank $2n$ Heisenberg vertex algebra. It was shown in \cite{LII}, Theorem 6.5, that $\cH(2n)^{\text{O}_{2n}}$ has the above strong generating type.  By Lemma 4.2 of \cite{LI}, the weights $2$ and $4$ fields generate $\cH(2n)^{\text{O}_{2n}}$. In fact, it is easy to check that only the weight $4$ field is needed, and that it can be replaced with a primary field which also generates the algebra. Finally, the statement that $\cD^k(n)$ inherits these properties of $\cH(2n)^{\text{O}_{2n}}$ for generic values of $k$ is also clear; the argument is similar to the proof of Corollary 8.6 of \cite{CLI}. \end{proof}

\begin{cor} \label{cor:gp} For all $n\geq 1$, there exists an ideal $K_n\subseteq \mathbb{C}[c,\lambda]$ and a localization $R_n$ of $\mathbb{C}[c,\lambda] / K_n$ such that $\cD^k(n)$ is the simple quotient of $\cW^{{\rm ev},K_n}_{R_n}(c,\lambda)$.
\end{cor}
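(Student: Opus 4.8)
The plan is to deduce this corollary directly from Lemma \ref{stronggentype} together with Theorem \ref{refinedsimplequotient}. By Lemma \ref{stronggentype}, for generic $k$ the algebra $\cD^k(n)$ is of type $\cW(2,4,\dots,2N)$ for some fixed $N\geq 2n^2+3n$ and is generated by the Virasoro field $L$ and a weight $4$ primary field $W^4$. The first step is to organize these algebras into a single one-parameter family: the central charge formula \eqref{ccgenpar} exhibits $c$ as a rational function of $k$, so as $k$ varies we obtain a rational curve in the $c$-line, and the structure constants of $\cD^k(n)$ (computed with respect to the normalization of $W^4$ fixed in Lemma \ref{stronggentype}) are rational functions of $k$ as well. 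In particular the parameter $\lambda$, which in $\cW^{\rm ev}(c,\lambda)$ governs the coefficient appearing in the $W^4(z)W^4(w)$ OPE, becomes a specific rational function $\lambda = \lambda(k)$. Eliminating $k$ from the pair $(c(k),\lambda(k))$ yields an irreducible polynomial relation $p(c,\lambda)=0$; let $K_n=(p(c,\lambda))$ and let $R_n$ be the localization of $\mathbb{C}[c,\lambda]/K_n$ obtained by inverting the finitely many denominators that occur, so that $\cD^k(n)$ for generic $k$ is defined over $R_n$ and is a vertex algebra over that ring.

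The second step is to verify the hypotheses of Theorem \ref{refinedsimplequotient} for this $R_n$-form of $\cD^k(n)$. Generation by $L$ and $W^4$ is exactly the last assertion of Lemma \ref{stronggentype}. The remaining hypothesis is that the graded character of $\cD^k(n)$ agrees with that of $\cW^{\rm ev}(c,\lambda)$ up to weight $13$. Here I would use the free-field limit: by Theorem 6.10 of \cite{CLII} we have $\lim_{k\to\infty}\cD^k(n)\cong \cH(2n)^{\Orth_{2n}}$, and by Theorem 6.5 of \cite{LII} the latter is of type $\cW(2,4,\dots,2N)$ with $N\geq 2n^2+3n$; since $13 < 2(2n^2+3n)$ already for $n=1$ (where $2n^2+3n=5$, $2N=10\geq 14$ after checking the $n=1$ type $\cW(2,4,6,8,10)$ noted in the text, so weights $\le 13$ are freely generated), there are no relations among the generators in weight $\le 13$, hence the character matches that of the freely generated vertex algebra on generators in weights $2,4,\dots$, which is precisely the character of $\cW^{\rm ev}(c,\lambda)$ in that range. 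Because characters are constant in flat families, the equality of characters up to weight $13$ which holds in the $k\to\infty$ limit persists for generic $k$, i.e. over $R_n$. Theorem \ref{refinedsimplequotient} then shows that, after localization, $\cD^k(n)$ is a quotient of $\cW^{K_n}(c,\lambda):=\cW^{{\rm ev},K_n}(c,\lambda)$ — more precisely of $\cW^{{\rm ev},K_n}_{R_n}(c,\lambda)$.

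The final step is to upgrade ``quotient'' to ``simple quotient.'' Since $\cD^k(n)$ is a quotient of $\cW^{{\rm ev},K_n}_{R_n}(c,\lambda)$, it is in particular a quotient of its simple quotient $\cW^{{\rm ev},K_n}_{R_n}(c,\lambda)$ modulo the maximal proper graded ideal, provided we know $\cD^k(n)$ is itself simple as a vertex algebra over $R_n$. For generic $k$ the vertex algebra $\cD^k(n)$ is simple: it is the $\mathbb{Z}_2$-orbifold of a coset of simple affine vertex algebras $L_k(\gs\go_{2n+1})$ by $V^k(\gs\go_{2n})$, and at generic (non-rational) level $V^k(\gs\go_{2n+1})=L_k(\gs\go_{2n+1})$ is simple, cosets of simple vertex algebras by nice subalgebras are simple for generic parameters, and orbifolds of simple vertex algebras under finite group actions are semisimple with the fixed-point algebra simple. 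Putting this together, the surjection $\cW^{{\rm ev},K_n}_{R_n}(c,\lambda)\twoheadrightarrow\cD^k(n)$ onto a simple algebra must factor through the simple quotient, giving the claimed identification.

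I expect the main obstacle to be the character-matching hypothesis of Theorem \ref{refinedsimplequotient}, specifically the bookkeeping needed to certify that $\cH(2n)^{\Orth_{2n}}$ (equivalently $\cD^k(n)$ for generic $k$) has no relations among its strong generators in weights $\le 13$. For small $n$ this requires knowing the minimal strong generating type precisely enough — Lemma \ref{stronggentype} only gives $N\geq 2n^2+3n$, and the case $n=1$ of type $\cW(2,4,6,8,10)$ has to be handled by hand to see that the first relation occurs in weight $12$ (not earlier), so that the characters still agree through weight $11$ but one must check weights $12$ and $13$ directly, comparing the number of normally-ordered monomials in $\cD^k(1)$ against those in $\cW^{\rm ev}(c,\lambda)$. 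A secondary, more routine, obstacle is making the elimination of $k$ and the choice of localization $R_n$ precise enough that the generic-$k$ specializations land inside $R_n$; this is the kind of argument carried out in the proof of Corollary 8.6 of \cite{CLI} and should go through verbatim here.
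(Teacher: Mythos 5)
For $n\geq 2$ your argument is essentially the paper's: invoke Lemma \ref{stronggentype} for generation by $L$ and $W^4$, check the character hypothesis of Theorem \ref{refinedsimplequotient} via the limit $\lim_{k\to\infty}\cD^k(n)\cong\cH(2n)^{\Orth_{2n}}$, and conclude using simplicity. Two remarks there: the correct source of the character bound is Weyl's second fundamental theorem for $\Orth_{2n}$, which puts the first relation among the quadratic generators in weight $4n^2+6n+2\geq 30$; your inference from the strong generating type ``$2N\geq 2(2n^2+3n)>13$'' is not quite the right statement (absence of decoupling below weight $2N$ does not by itself exclude other linear relations among normally ordered monomials), though the conclusion is the same. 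Also, your simplicity argument (``cosets of simple vertex algebras by nice subalgebras are simple for generic parameters'') is not a theorem in that generality; the paper instead deduces simplicity of $\cD^k(n)$ over $\mathbb{C}[k]$ from simplicity of the limit $\cH(2n)^{\Orth_{2n}}$, which follows from \cite{DLM}.

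The genuine gap is at $n=1$. There the hypothesis of Theorem \ref{refinedsimplequotient} actually \emph{fails}: $\cD^k(1)\cong N^k(\gs\gl_2)^{\mathbb{Z}_2}$ is of type $\cW(2,4,6,8,10)$ precisely because the first relation occurs in weight $4n^2+6n+2=12$, which forces the would-be generator $W^{12}=W^4_{(1)}W^{10}$ to decouple. Consequently the graded character of $\cD^k(1)$ is strictly smaller than that of $\cW^{\rm ev}(c,\lambda)$ already in weight $12$, so no amount of ``checking weights $12$ and $13$ directly'' will let you apply Theorem \ref{refinedsimplequotient} as stated. You sensed this in your list of obstacles but did not resolve it; the paper sidesteps it by treating $n=1$ separately, citing Theorem 10.1 of \cite{KL}, which establishes the realization of $N^k(\gs\gl_2)^{\mathbb{Z}_2}$ as a quotient of $\cW^{\rm ev}(c,\lambda)$ by a direct computation rather than by the character criterion. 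Your proof needs an analogous separate argument for $n=1$ to be complete.
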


\begin{proof} This holds for $n=1$ by Theorem 10.1 of \cite{KL}. For $n>1$, the simplicity of $\cD^k(n)$ as a vertex algebra over a localization of $\mathbb{C}[k]$ follows from the simplicity of $\cH(2n)^{\text{O}_{2n}}$, which follows from \cite{DLM}. In view of Theorems \ref{refinedsimplequotient} and \ref{stronggentype}, it then suffices to show that the graded characters of $\cD^k(n)$ and $\cW^{\text{ev}}(c,\lambda)$ agree up to weight $13$. This follows from Weyl's second fundamental theorem of invariant theory for $\text{O}_{2n}$ \cite{W}, since there are no relations among the generators of weight less than $4n^2+6n+2$. \end{proof}

\begin{thm} \label{main:realization} For all $n\geq 2$, $\cD^k(n)$ is isomorphic to a localization of the quotient $\cW^{\text{ev}}_{K_n}(c,\lambda)$, where the ideal $K_n\subseteq \mathbb{C}[c,\lambda]$ is described explicitly via the parametrization $k \mapsto (c_n(k), \lambda_n(k))$ given by
\begin{equation} \label{eq:realization} \begin{split}  c_n(k)  & = \frac{k n (2 k + 2 n -3)}{(k + 2 n -2) (k + 2 n -1)},  \ \ \  \lambda_n(k)  = \frac{(k + 2 n -2) (k + 2 n -1) p_n(k)}{7 (k-2) ( k + n -1) (2 n-1) q_n(k) r_n(k)},
\\  p_n(k)  & = -112 + 188 k - 62 k^2 - 26 k^3 + 12 k^4 + 744 n - 1336 k n + 857 k^2 n - 252 k^3 n 
\\ &\quad  + 36 k^4 n - 1720 n^2 + 2534 k n^2 - 1198 k^2 n^2 + 188 k^3 n^2 + 1632 n^3 - 1544 k n^3 \\ & \quad+ 304 k^2 n^3 
  - 544 n^4 + 152 k n^4,
\\ q_n (k)  & =  20 - 19 k + 6 k^2 - 42 n + 28 k n + 28 n^2,
\\  r_n(k) & = 44 - 66 k + 22 k^2 - 132 n + 73 k n + 10 k^2 n + 88 n^2 + 10 k n^2.
\end{split}
\end{equation} 
\end{thm}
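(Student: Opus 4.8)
\emph{Proof strategy.} By Corollary~\ref{cor:gp}, $\cD^k(n)$ is the simple quotient of $\cW^{\mathrm{ev},K_n}_{R_n}(c,\lambda)$ for a prime ideal $K_n$ — its generator being an irreducible factor of a Shapovalov determinant, as in the classification of one-parameter quotients of $\cW^{\mathrm{ev}}(c,\lambda)$ in \cite{KL} — so $V(K_n)\subseteq\mathbb{C}^2$ is an irreducible curve. Since $\cD^k(n)$ is a $\mathbb{Z}_2$-orbifold of a commutant inside $V^k(\gs\go_{2n+1})$, its OPE structure constants — in particular the image $\lambda_n(k)$ of $\lambda$, once $W^4$ is fixed by the standard normalization — are rational functions of $k$, and $k\mapsto(c_n(k),\lambda_n(k))$ is a dominant parametrization of $V(K_n)$. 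Here $c_n(k)$ is the coset central charge \eqref{ccgenpar}, so the entire content is to pin down the single rational function $\lambda_n(k)$, and for that it suffices to know its values at infinitely many $k$.

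The plan is to read these values off from the coincidences \eqref{typeDcoinc}. Fix a positive integer $m$ and put $\ell_m=-(2m-2)+\frac{2m+2n-2}{2m+2n-1}$. For all but finitely many $m$, the specialization of the parameter to $k=2m$ realizes $\cD^{2m}(n)$ as a graded quotient of $\cW^{\mathrm{ev}}(c,\lambda)$ at the point $(c_n(2m),\lambda_n(2m))$, and hence realizes the simple vertex algebra $\cD_{2m}(n)$ as the simple quotient of $\cW^{\mathrm{ev}}(c,\lambda)$ at that point; on the other hand \eqref{typeDcoinc} identifies $\cD_{2m}(n)$ with $\cW_{\ell_m}(\gs\go_{2m})^{\mathbb{Z}_2}$, which is by construction the simple quotient of $\cW^{\mathrm{ev}}(c,\lambda)$ at the level-$\ell_m$ point of the truncation curve of $\cW^{\ell}(\gs\go_{2m})^{\mathbb{Z}_2}$ computed in \cite{KL}. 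By the coincidence criterion (Theorem~8.1 of \cite{KL}), two points with isomorphic simple quotients agree away from a short list of degenerate configurations; folding those, together with the poles of $c_n,\lambda_n$ and the points outside $R_n$, into a finite exceptional set of $m$'s, we conclude that $(c_n(2m),\lambda_n(2m))$ is the level-$\ell_m$ point of the type-$D$ orbifold curve for infinitely many $m$. In particular $\lambda_n(2m)$ equals the value of the \cite{KL} parametrization at $(\ell,m)=(\ell_m,m)$, an explicit rational function of $m$. Since $\lambda_n\in\mathbb{C}(k)$, agreement at infinitely many $k=2m$ forces $\lambda_n(k)$ to equal, identically, the rational function obtained from the \cite{KL} parametrization by the substitution $m\mapsto k/2$, $\ell\mapsto -(k-2)+\frac{k+2n-2}{k+2n-1}$; equivalently, $V(K_n)$ is an irreducible curve meeting the irreducible rational curve cut out by this substitution in infinitely many points, hence equals it, so $K_n$ is precisely the ideal of the parametrized curve.

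It then remains to carry out that substitution explicitly: starting from the \cite{KL} formulas for the central charge and for $\lambda$ along the truncation curve of $\cW^{\ell}(\gs\go_{2m})^{\mathbb{Z}_2}$, set $m=k/2$ and $\ell=-(k-2)+\frac{k+2n-2}{k+2n-1}$, simplify, and verify that one recovers exactly $c_n(k)$ and the displayed formula for $\lambda_n(k)$ in \eqref{eq:realization}, including the factorizations $p_n,q_n,r_n$ of numerator and denominator. This is a finite rational-function identity in the variables $k$ and $n$ and uses no new vertex-algebra input; I expect it to be the bulk of the work. The one genuinely non-formal ingredient is \eqref{typeDcoinc} itself — the type-$D$ coset realization of principal $\cW$-algebras and the level-rank duality from \cite{ACL} — which I would either quote directly or re-derive from those results; granting it, the argument above is essentially bookkeeping, the only care needed being that the finite set of excluded $m$ does not exhaust $\mathbb{Z}_{>0}$, which is automatic.
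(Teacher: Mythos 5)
Your overall strategy is the same as the paper's: use Corollary~\ref{cor:gp} and the rationality of the structure constants to see that $\lambda_n(k)$ is a single rational function of $k$, pin down its values at infinitely many points $k=2m$ via the coincidence with the type-$D$ orbifold truncation curve of \cite{KL}, invoke the coincidence criterion to equate the parameter points, and conclude by a rational-function identity. That part is sound, and your explicit appeal to Theorem~8.1 of \cite{KL} to pass from ``isomorphic simple quotients'' to ``equal $(c,\lambda)$'' is if anything more careful than the paper's phrasing.

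The genuine gap is in the step you describe as ``quote directly or re-derive'': the isomorphism \eqref{typeDcoinc} is \emph{not} a statement of \cite{ACL}, and it cannot be quoted. What \cite{ACL} (Corollary 1.3) actually provides is \eqref{eq:ACL1}, an isomorphism between $\cW_{\ell}(\gs\go_{2m})$ and a $\mathbb{Z}_2\times\mathbb{Z}_2$-orbifold of an \emph{extension} $L_{2m}(\gs\go_{2n+1})\oplus\mathbb{L}_{2m}(2m\omega_1)$ taken invariants under $\gs\go_{2n}[t]$; the coset $\cD_{2m}(n)$ is only a subalgebra of the left-hand side, and $\cW_{\ell}(\gs\go_{2m})^{\mathbb{Z}_2}$ only a subalgebra of the right-hand side. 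Extracting \eqref{eq:ACL2} from \eqref{eq:ACL1} is the real content of the paper's proof: one must show that for $m>4$ each side has a \emph{unique} weight~$4$ primary (using that the lowest conformal weight of $\mathbb{L}_{2m}(2m\omega_1)$ is $m$ and that $\cW_{\ell}(\gs\go_{2m})$ has generators in weights $2,4,\dots,2m$ and $m$), that this primary lies in $\cD_{2m}(n)$, respectively in $\cW_{\ell}(\gs\go_{2m})^{\mathbb{Z}_2}$, and that it \emph{generates} each of these subalgebras --- for $\cD_{2m}(n)$ this follows from generation at generic level only for $m$ sufficiently large, and for $\cW_{\ell}(\gs\go_{2m})^{\mathbb{Z}_2}$ one needs generation at the \emph{specific} non-generic level $\ell$, which the paper gets from the level-independence of the vertex Poisson structure on $\gr\,\cW^{\ell}(\gs\go_{2m})$ (Propositions A.3--A.4 of \cite{ALY}) plus a check that the Virasoro field is generated by $W^4$ at that $\ell$. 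Note also that you cannot shortcut this by citing the coincidence in Theorem~\ref{coin:d}(1), since that theorem is itself deduced from the present one; the logic would be circular. Without this intermediate argument your proof establishes nothing beyond the (correct) reduction of the theorem to \eqref{eq:ACL2}, which remains unproved. The closing substitution $m\mapsto k/2$, $\ell\mapsto-(k-2)+\frac{k+2n-2}{k+2n-1}$ into Equation (B.1) of \cite{KL} is, as you say, bookkeeping.
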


\begin{proof}
Let $n$ be fixed. In view of Corollary \ref{cor:gp} and the fact that all structure constants in $\cD^k(n)$ are rational functions of $k$, there is some rational function $\lambda_n(k)$ of $k$ such that $\cD^k(n)$ is obtained from $\cW^{\rm ev}(c,\lambda)$ by setting $\displaystyle c =c_n(k) $ and $\lambda = \lambda_n(k)$, and then taking the simple quotient. It is not obvious yet that $\lambda_n(k)$ is a rational function of $n$ as well.

For $k$ a positive integer, it is well known \cite{KW} that the map $V^k(\gs\go_{2n}) \ra V^k(\gs\go_{2n+1})$ descends to a homomorphism of simple algebras $L_k(\gs\go_{2n}) \ra L_k(\gs\go_{2n+1})$. Letting $\cD_k(n)$ denote the simple quotient of $\cD^k(n)$, it is apparent from Lemma 2.1 of \cite{ACKL} and Theorem 8.1 of \cite{CLII} that $\text{Com}(L_k(\gs\go_{2n}),L_k(\gs\go_{2n+1}))$ is simple and coincides with the simple quotient of $\text{Com}(V^k(\gs\go_{2n}),V^k(\gs\go_{2n+1}))$. Moreover, taking $\mathbb{Z}_2$-invariants preserves simplicity, hence
$$\cD_k(n) \cong \text{Com}(L_k(\gs\go_{2n}),L_k(\gs\go_{2n+1}))^{\mathbb{Z}_2}.$$

Next, by Corollary 1.3 of \cite{ACL}, for all $n\geq 1$ and $m\geq 2$, we have an isomorphism 
\begin{equation} \label{eq:ACL1} \big(\big(L_{2m}(\gs\go_{2n+1}) \oplus \mathbb{L}_{2m}(2m \omega_1)\big)^{\gs\go_{2n}[t]}\big)^{\mathbb{Z}_2 \times \mathbb{Z}_2} \cong \cW_{\ell}(\gs\go_{2m}),\quad \ell = -(2m-2) + \frac{2n+2m-2}{2n+2m-1}.\end{equation} In this notation, $\omega_1$ denotes the first fundamental weight of $\gs\go_{2n+1}$ and $\mathbb{L}_{2m}(2m \omega_1)$ denotes the simple quotient of the corresponding Weyl module.

Note that $\big(L_{2m}(\gs\go_{2n+1})^{\gs\go_{2n}[t]}\big)^{\mathbb{Z}_2} =  \cD_{2m}(n)$ is manifestly a subalgebra of the left hand side of \eqref{eq:ACL1}. Also, the lowest-weight component of $\mathbb{L}_{2m}(2m \omega_1)$ has conformal weight $m$. If $m>4$, the left-hand side then has a unique primary weight $4$ field which lies in $\cD_{2m}(n)$. Similarly, since $\cW_{\ell}(\gs\go_{2m})$ has strong generators in weights $2,4,\dots, 2m$ and $m$, for $m>4$ the right hand side has a unique primary weight $4$ field, which lies in the $\mathbb{Z}_2$-orbifold $\cW_{\ell}(\gs\go_{2m})^{\mathbb{Z}_2}$.

Since $\cD^k(n)$ is generated by the weight $4$ field as a one-parameter vertex algebra, the weight $4$ field must generate $\cD_{2m}(n)$ for all $m$ sufficiently large. By Corollary 6.1 of \cite{KL}, $\cW^{\ell}(\gs\go_{2m})^{\mathbb{Z}_2}$ is generated by the weight $4$ field as a one-parameter vertex algebra; equivalently, this holds for generic values of $\ell$. By the same argument as Proposition A.4 of \cite{ALY}, the vertex Poisson structure on the associated graded algebra $\text{gr}\ \cW^{\ell}(\gs\go_{2m})$ with respect to Li's canonical filtration, is independent of $\ell$ for all noncritical values of $\ell$. In particular this holds for the subalgebra  $(\text{gr}\ \cW^{\ell}(\gs\go_{2m}))^{\mathbb{Z}_2} =  \text{gr}(\cW^{\ell}(\gs\go_{2m})^{\mathbb{Z}_2})$. It follows from the same argument as Proposition A.3 of \cite{ALY} that $\cW^{\ell}(\gs\go_{2m})^{\mathbb{Z}_2}$ is generated by the weights $2$ and $4$ fields for all noncritical values of $\ell$, and the same therefore holds for the simple quotient $\cW_{\ell}(\gs\go_{2m})^{\mathbb{Z}_2}$. Finally, for $\ell = -(2m-2) + \frac{2n+2m-2}{2n+2m-1}$, it is straightforward to verify that the Virasoro field can be generated from the weight $4$ field, so the weight $4$ field generates the whole algebra.

Therefore if $m$ is sufficiently large, we obtain  
\begin{equation} \label{eq:ACL2} \cD_{2m}(n) \cong \cW_{\ell}(\gs\go_{2m})^{\mathbb{Z}_2},\qquad \ell = -(2m-2) + \frac{2m+2n-2}{2m+2n-1}.\end{equation}
In fact, we will see later (Theorem \ref{coin:d}) that this holds for all $m\geq 2$.

Finally, the truncation curve that realizes $\cW_{\ell}(\gs\go_{2m})^{\mathbb{Z}_2}$ as a quotient of $\cW^{\rm ev}(c,\lambda)$ is given by Theorem 6.3 of \cite{KL}, and in parametric form by Equation (B.1) of \cite{KL}. In view of \eqref{eq:ACL2}, we must have $\lambda_n(2m) = \lambda_m(\ell)$ for $\ell = -(2m-2) + \frac{2n+2m-2}{2n+2m-1}$ for $m$ sufficiently large, where $\lambda_m(\ell)$ is given by Equation (B.1) of \cite{KL}. If follows that for infinitely many values of $k$, $\lambda_n(k)$ is given by the above formula \eqref{eq:realization}. Since $ \lambda_n(k)$ is a rational function of $k$, this equality holds for all $k$ where it is defined. This completes the proof. \end{proof}

\section{Coincidences}
In this section, we shall use Theorem \ref{main:realization} to classify all coincidences between the simple quotient $\cD_k(n)$ and the $\mathbb{Z}_2$-orbifold $\cW_{\ell}(\gs\go_{2m})^{\mathbb{Z}_2}$, as well as $\cW_{\ell}(\gs\go_{2m+1})$. We also classify all coincidences between $\cD_k(n)$ and $\cD_{\ell}(m)$ for $m \neq n$. 

\begin{thm} \label{coin:d} For $n\geq 1$ and $m\geq 2$, aside from the critical levels $k = -2n+2$ and $k=-2n+1$, and the degenerate cases $c = \frac{1}{2}, -24$, all isomorphisms $\cD_k(n) \cong \cW_{\ell}(\gs\go_{2m})^{\mathbb{Z}_2}$ appear on the following list:

\begin{enumerate}

\item $\displaystyle k = 2m,\qquad \ell = -(2m-2) + \frac{2 n + 2 m -2}{2 n + 2 m -1}$,

\item $\displaystyle k = -(2n-2) -\frac{2 n -1}{2 (m-1)},\qquad \ell = -(2m-2) + \frac{2 m - 2 n -1 }{2 ( m-1)}$,

\item $\displaystyle k = -(2n-2) + \frac{n - m}{m},\qquad \ell = -(2m-2) + \frac{m-n}{m}$.

\end{enumerate}

\end{thm}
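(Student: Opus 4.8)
The plan is to reduce the classification of isomorphisms $\cD_k(n) \cong \cW_{\ell}(\gs\go_{2m})^{\mathbb{Z}_2}$ to a finite problem about intersection points of truncation curves in $\mathbb{C}[c,\lambda]$, exactly as described in the discussion following Theorem 8.1 of \cite{KL}. By Corollary \ref{cor:gp} and Theorem \ref{main:realization}, $\cD_k(n)$ is (a localization of) the simple quotient $\cW^{\mathrm{ev}}_{K_n}(c,\lambda)$ along the curve $k\mapsto (c_n(k),\lambda_n(k))$ given in \eqref{eq:realization}, and by Theorem 6.3 / Equation (B.1) of \cite{KL}, $\cW_{\ell}(\gs\go_{2m})^{\mathbb{Z}_2}$ is the simple quotient along a known curve $\ell \mapsto (\hat c_m(\ell), \hat\lambda_m(\ell))$. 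Since (aside from the stated degenerate values $c = \tfrac12, -24$, where the criterion of Theorem 8.1 of \cite{KL} fails) an isomorphism of simple quotients forces equality of the pairs $(c,\lambda)$, the first step is to set $c_n(k) = \hat c_m(\ell)$ and $\lambda_n(k) = \hat\lambda_m(\ell)$ and solve this system.

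\smallskip

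\textbf{Carrying out the solution.} First I would use the central charge equation. Both central charges are explicit rational functions: $c_n(k)$ is given in \eqref{ccgenpar}, and $\hat c_m(\ell)$ is the well-known central charge of $\cW^{\ell}(\gs\go_{2m})$ (which equals that of its $\mathbb{Z}_2$-orbifold). Cross-multiplying gives a polynomial relation $F(k,\ell; n,m) = 0$; for fixed $n,m$ this is a curve in the $(k,\ell)$-plane, and one can solve for $\ell$ as an algebraic function of $k$ — generically there will be two branches, since the $\cW(\gs\go_{2m})$ central charge is a degree-two rational function of $\ell$ (reflecting the duality $\ell \leftrightarrow$ dual level). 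Substituting each branch into the equation $\lambda_n(k) = \hat\lambda_m(\ell)$ then yields a single polynomial equation $P(k; n,m) = 0$ in $k$ alone. The key point is that $P$ has bounded degree in $k$ independent of $n,m$: both $\lambda_n(k)$ and $\hat\lambda_m(\ell(k))$ are rational functions whose numerator and denominator degrees in $k$ are controlled, so the number of solutions $k$ is finite and uniformly bounded. One then identifies the roots. Cases (1), (2), (3) are the three families the authors claim exhaust the list, so the computation should produce exactly these (after discarding the critical levels $k = -2n+2, -2n+1$, where $c_n(k)$ has poles, and the degenerate values $c = \tfrac12, -24$). Verifying that each of (1)--(3) does give a genuine coincidence is the easy direction: substitute and check $c_n(k) = \hat c_m(\ell)$ and $\lambda_n(k) = \hat\lambda_m(\ell)$ hold identically in $n,m$; case (1) is already established in \eqref{eq:ACL2} (extended to all $m\geq 2$), and this is presumably where the ``$m\geq 2$'' promised after \eqref{eq:ACL2} gets proved.

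\smallskip

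\textbf{The main obstacle.} The hard part will be the algebra: the expression for $\lambda_n(k)$ in \eqref{eq:realization} is genuinely complicated (a quartic-in-$k$ numerator $p_n(k)$ over a product of linear and quadratic factors $q_n, r_n$), and the $\cW(\gs\go_{2m})^{\mathbb{Z}_2}$ truncation formula from (B.1) of \cite{KL} is comparably involved, so clearing denominators and factoring $P(k;n,m)$ to isolate exactly the three families — while showing no spurious extra families appear — is a substantial (though in principle routine) symbolic computation, best done with a computer algebra system. A secondary subtlety is bookkeeping around the degenerate loci: one must check that the points of (1)--(3) do not accidentally lie on $c = \tfrac12$ or $c = -24$ for generic $n,m$ (and handle the sporadic $(n,m)$ where they do), and confirm that the ``extra'' solutions of $P$ that are discarded genuinely correspond either to critical levels, to $c\in\{\tfrac12,-24\}$, or to the second branch collapsing onto the first. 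Once $P$ is factored, reading off the $\ell$-values in each case from the corresponding branch $\ell(k)$ is immediate.
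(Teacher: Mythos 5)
Your proposal follows essentially the same route as the paper: invoke Theorem \ref{main:realization} and Equation (B.1) of \cite{KL} to realize both algebras as quotients of $\cW^{\rm ev}(c,\lambda)$ along explicit truncation curves, use Theorem 8.1 / Corollary 8.2 of \cite{KL} to reduce all non-degenerate coincidences to intersection points of those curves, compute the finitely many intersection points symbolically (the paper finds exactly five, two of which are the degenerate $c=\tfrac12,-24$), and separately rule out extra coincidences at the poles of $\lambda_n(k)$ where the quotient realization fails. The only difference is cosmetic — you solve the system in the $(k,\ell)$ parameters while the paper works with points in the $(c,\lambda)$-plane — so the argument is the same.
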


\begin{proof} Recall first that $\cW_{\ell}(\gs\go_{2m})^{\mathbb{Z}_2}$ is realized as the simple quotient of $\cW^{\mathrm{ev}, J_{m}}(c,\lambda)$, where the ideal $J_m \subseteq \mathbb{C}[c,\lambda]$ is given in parametrized form by Equation (B.1) of \cite{KL}. First, we exclude the values of $k$ and $\ell$ which are poles of the functions $\lambda_n(k)$ given by \eqref{eq:realization}, and $\lambda_m(\ell)$ given by Equation (B.1) of \cite{KL}, since at these values, $\cD^k(n)$ and $\cW_{\ell}(\gs\go_{2m})^{\mathbb{Z}_2}$ are not quotients of $\cW^{\mathrm{ev}}(c,\lambda)$. For all other noncritical values of $k$ and $\ell$, $\cD^k(n)$ and $\cW_{\ell}(\gs\go_{2m})^{\mathbb{Z}_2}$ are obtained as quotients of $\cW^{\mathrm{ev},I_{n}}(c,\lambda)$ and $\cW^{\mathrm{ev}, J_{m}}(c,\lambda)$, respectively. By Corollary 8.2 of \cite{KL}, aside from the degenerate cases given by Theorem 8.1 of \cite{KL}, all other coincidences  $\cD_k(n) \cong \cW_{\ell}(\gs\go_{2m})^{\mathbb{Z}_2}$ correspond to intersection points on the truncation curves $V(K_n)$ and $V(J_m)$. A calculation shows that $V(K_n)\cap V(J_m)$ consists of exactly five points $(c,\lambda)$, namely, 
\begin{equation} \begin{split} &\bigg(-24, -\frac{1}{245}\bigg),\qquad \bigg(\frac{1}{2}, -\frac{2}{49}\bigg), \qquad \bigg(  \frac{m n (4 m + 2 n -3)}{(m + n -1) (2 m + 2 n -1)},\   \lambda_1  \bigg),
\\ & \bigg( -\frac{2 m n (3 - 4 m - 2 n + 4 m n)}{2 m - 2 n -1}, \  \lambda_2 \bigg), \quad \bigg( -\frac{(2 m n + m - 2 n) (2 m n -m - n)}{m - n}, \  \lambda_3 \bigg).\end{split} \end{equation}
Here
\begin{equation} 
\begin{split}
\lambda_1 & =  \frac{(m + n -1) (2 m + 2 n -1)g}{7 (m -1) (2 m + n -1) (2 n -1)gh},
\\  f & = -28 + 94 m - 62 m^2 - 52 m^3 + 48 m^4 + 186 n - 668 m n + 857 m^2 n - 504 m^3 n 
\\ & \quad+ 144 m^4 n  - 430 n^2 + 1267 m n^2 - 1198 m^2 n^2 + 376 m^3 n^2 + 408 n^3 - 772 m n^3
\\ & \quad + 304 m^2 n^3  - 136 n^4 + 76 m n^4,
\\ g &  = 10 - 19 m + 12 m^2 - 21 n + 28 m n + 14 n^2,
\\ h & = 22 - 66 m + 44 m^2 - 66 n + 73 m n + 20 m^2 n + 44 n^2 + 10 m n^2.
\end{split} \end{equation}

\begin{equation} 
\begin{split}
\lambda_2 & =   \frac{(1 - 2 m + 2 n) f}{7 (1 - 2 m +  2 m n) (-1 - 2 n + 4 m n) gh },
\\  f & = 14 - 33 m - 2 m^2 + 24 m^3 + 74 n - 404 m n +  873 m^2 n - 696 m^3 n + 144 m^4 n\\ 
&\quad  + 80 n^2   - 178 m n^2 - 260 m^2 n^2 + 452 m^3 n^2 - 112 m^4 n^2 - 24 n^3 + 264 m n^3 \\ 
&\quad - 348 m^2 n^3  + 256 m^3 n^3  - 64 m^4 n^3 + 72 m n^4 - 128 m^2 n^4 - 48 m^3 n^4 + 32 m^4 n^4,
 \\ g & = -10 + 19 m - 12 m^2 - 2 n + 22 m n - 8 m^2 n - 12 n^2 - 8 m n^2 + 8 m^2 n^2,
 \\ h & = 11 - 22 m + 22 n + 15 m n - 20 m^2 n - 10 m n^2 + 20 m^2 n^2.
 \end{split} \end{equation}

 \begin{equation} 
\begin{split}
\lambda_3 & =   \frac{(n-m) f}{7 (m-1) (2 n -1) (m - n + 2 m n) gh}, 
\\  f & = -34 m^3 + 19 m^4 + 68 m^2 n - 38 m^3 n - 22 m n^2 - 185 m^2 n^2 + 302 m^3 n^2 - 80 m^4 n^2  \\ 
& \quad- 12 n^3  + 204 m n^3 - 302 m^2 n^3 + 80 m^3 n^3 - 36 n^4 + 100 m n^4 - 40 m^2 n^4 - 40 m^3 n^4\\ &\quad + 16 m^4 n^4,
 \\ g & = -7 m^2 + 7 m n - 6 n^2 - 4 m n^2 + 4 m^2 n^2,
 \\ h & = -22 m -  5 m^2 + 22 n + 5 m n + 10 n^2 - 30 m n^2 + 20 m^2 n^2.  \end{split} \end{equation}

By Theorem 8.1 of \cite{KL}, the first two intersection points occur at degenerate values of $c$. By replacing the parameter $c$ with the levels $k$ and $\ell$, we see that the remaining intersection points yield the nontrivial isomorphisms in Theorem \ref{coin:d}. Moreover, by Corollary 8.2 of \cite{KL}, these are the only such isomorphisms except possibly at the values of $k, \ell$ excluded above. 

Finally, suppose that $k$ is a pole of the function $\lambda_n(k)$ given by \eqref{eq:realization}. It is not difficult to check that the corresponding values of $\ell$ for which $c_n(k) = c_m(\ell)$, are not poles of $\lambda_m(\ell)$. As above, $c_n(k)$ and $\lambda_n(k)$ are given by \eqref{eq:realization}, and $c_m(\ell)$ and $\lambda_m(\ell)$ are given by Equation (B.1) of \cite{KL}. It follows that there are no additional coincidences at the excluded points.
\end{proof}

Next, we classify the coincidences between $\cD_k(n)$ and $\cW_{\ell}(\gs\go_{2m+1})$.

\begin{thm} \label{coin:c} For $n\geq 1$ and $m\geq 2$, aside from the critical levels $k = -2n+2$ and $k=-2n+1$, and the degenerate cases $c = \frac{1}{2}, -24$, all isomorphisms $\cD_k(n) \cong \cW_{\ell}(\gs\go_{2m+1})$ appear on the following list:

\begin{enumerate}

\item $\displaystyle k = -(2n-2) + \frac{1}{2} (2 n + 2 m -1),\qquad \ell = -(2m-1) + \frac{2 m + 2 n -1}{2 m + 2 n+1}$,

\item $\displaystyle k = -(2n-2) + \frac{2 n - 2 m -1}{2 m+2},\qquad \ell = -(2m-1) +  \frac{2 m - 2 n +1}{2 m+2}$,

\item $\displaystyle k = -(2n-2)  -\frac{n}{m} ,\qquad \ell = -(2m-1) + \frac{m - n}{m}$,

\item $\displaystyle k = -(2n-2) -\frac{2 (n -1)}{2 m -1} ,\qquad \ell = -(2m-1) + \frac{2 m -1}{2 m - 2 n +1}$.

\item $\displaystyle k = -(2n-2) + \frac{2 (n - m -1)}{2 m+1},\qquad  \ell = -(2m-1) + \frac{2 m +1}{2 (m - n +1)}$.

\end{enumerate}

\end{thm}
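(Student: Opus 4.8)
The plan is to adapt the proof of Theorem~\ref{coin:d} almost verbatim, with the type~$D$ orbifold $\cW_{\ell}(\gs\go_{2m})^{\mathbb{Z}_2}$ replaced throughout by the type~$B$ principal $\cW$-algebra $\cW^{\ell}(\gs\go_{2m+1})$. Recall from \cite{KL} that $\cW^{\ell}(\gs\go_{2m+1})$ is realized as the simple quotient of $\cW^{\mathrm{ev},\tilde J_m}(c,\lambda)$ for a prime ideal $\tilde J_m \subseteq \mathbb{C}[c,\lambda]$ whose truncation curve $V(\tilde J_m)$ is given explicitly, in parametrized form $\ell \mapsto (c_m(\ell),\lambda_m(\ell))$, in \cite{KL}. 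Together with Theorem~\ref{main:realization}, which gives the parametrization $k \mapsto (c_n(k),\lambda_n(k))$ of $V(K_n)$, both $\cD^k(n)$ and $\cW^{\ell}(\gs\go_{2m+1})$ are thereby exhibited as one-parameter quotients of $\cW^{\mathrm{ev}}(c,\lambda)$ over their respective rational base curves. As in the proof of Theorem~\ref{coin:d}, I first discard the critical levels $k = -2n+2$ and $k = -2n+1$, and the poles of $\lambda_n(k)$ and of $\lambda_m(\ell)$, since at those values the corresponding algebra fails to be a quotient of $\cW^{\mathrm{ev}}(c,\lambda)$.

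For all remaining noncritical $k$ and $\ell$, $\cD^k(n)$ and $\cW^{\ell}(\gs\go_{2m+1})$ are obtained as quotients of $\cW^{\mathrm{ev},K_n}(c,\lambda)$ and $\cW^{\mathrm{ev},\tilde J_m}(c,\lambda)$, so by Corollary~8.2 of \cite{KL}, aside from the degenerate cases of Theorem~8.1 of \cite{KL} (which force $c \in \{\tfrac12,-24\}$), every isomorphism $\cD_k(n) \cong \cW_{\ell}(\gs\go_{2m+1})$ corresponds to an intersection point of $V(K_n)$ and $V(\tilde J_m)$. The heart of the argument is then the explicit computation of $V(K_n) \cap V(\tilde J_m)$: eliminating the two parameters from the rational parametrizations (a resultant computation in $\mathbb{C}[c,\lambda]$ with $m$ and $n$ appearing as parameters) produces a finite set of points, two of which occur at the degenerate central charges $c = \tfrac12$ and $c = -24$ of Theorem~8.1 of \cite{KL}, while the remaining ones are precisely the five points whose $c$-coordinates, after solving $c_n(k) = c_0$ and $c_m(\ell) = c_0$ and using the $\lambda$-coordinate to select the correct root of each quadratic, reproduce the five families~(1)--(5) in the statement. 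As in Theorem~\ref{coin:d}, replacing the parameter $c$ by the levels $k$ and $\ell$ then turns these intersection points into genuine isomorphisms, and one checks that the five families are distinct and that no degeneration occurs in the boundary range $m = 2$ and small $n$.

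Finally, mirroring the last paragraph of the proof of Theorem~\ref{coin:d}, I would rule out hidden coincidences at the excluded points: if $k$ is a pole of $\lambda_n(k)$, one verifies that the value(s) of $\ell$ solving $c_n(k) = c_m(\ell)$ are not poles of $\lambda_m(\ell)$, so that no additional isomorphism can occur there; the symmetric check disposes of the poles of $\lambda_m(\ell)$, while the critical levels are excluded by hypothesis.

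The step I expect to be the main obstacle is computational but genuinely delicate. The type~$B$ truncation curve has higher degree than in type~$D$, so the resultant of the two parametrizations is a sizeable polynomial in $c$ and $\lambda$ with coefficients in $\mathbb{Z}[m,n]$; after factoring it one must correctly discard the spurious factors and the components at infinity, and then match branches. Since $c_n(k)$ is a degree-two rational function of $k$, and $c_m(\ell)$ likewise in $\ell$, each admissible value of $c$ a priori yields up to two candidate levels, and one must use the $\lambda$-coordinate --- together with the central charge formula~\eqref{ccgenpar} and the known type~$B$ central charge --- to pick out the pair $(k,\ell)$ that actually lies on both curves. Carrying out this bookkeeping carefully is exactly what yields five families rather than an over- or undercount, and it is where I would expect the computation to require the most care.
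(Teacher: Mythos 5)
Your proposal matches the paper's proof, which is exactly the argument of Theorem \ref{coin:d} run with the truncation curve of $\cW^{\ell}(\gs\go_{2m+1})$ (Equation (A.3) of \cite{KL}) in place of that of $\cW^{\ell}(\gs\go_{2m})^{\mathbb{Z}_2}$: one computes $V(K_n)\cap V(I_m)$, finds the two degenerate points $(\tfrac12,-\tfrac{2}{49})$ and $(-24,-\tfrac{1}{245})$ plus the five nontrivial ones, and then rules out extra coincidences at the poles of $\lambda_n(k)$. The structure, the appeal to Corollary 8.2 and Theorem 8.1 of \cite{KL}, and the final pole check are all the same as in the paper.
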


\begin{proof} The argument is the same as the proof of Theorem \ref{coin:d}. First, $\cW_{\ell}(\gs\go_{2m+1})$ is realized as the simple quotient of $\cW^{{\rm ev}, I_m}(c,\lambda)$ where the ideal $I_m\subseteq \mathbb{C}[c,\lambda]$ is parametrized explicitly by Equation (A.3) of \cite{KL}. The above isomorphisms all arise from the intersection points between the truncation curves $V(K_n)$ for $\cD^k(n)$ and $V(I_m)$ for $\cW_{\ell}(\gs\go_{2m+1})$. A calculation shows that there are exactly $7$ intersection points: the degenerate points
$(\frac{1}{2},-\frac{2}{49})$ and $(-24, -\frac{1}{245})$, and the five nontrivial ones appearing above. One then has to rule out additional coincidences at the points where $\cD_k(n)$ does not arise as a quotient of $\cW^{\mathrm{ev}}(c,\lambda)$, namely, the poles of $\lambda_n(k)$. The details are straightforward and are left to the reader. \end{proof}

Finally, we classify all isomorphisms $\cD_k(m) \cong \cD_{\ell}(n)$ for $n\neq m$.

\begin{thm} \label{coin:self} For $m,n\geq 1$ and $n\neq m$, aside from the degenerate cases $c = \frac{1}{2}, -24$ and poles of $c_n(k)$, $\lambda_n(k)$ and $c_m(k)$, $\lambda_m(k)$ the complete list of isomorphisms $\cD_k(m) \cong \cD_{\ell}(n)$ is the following:

\begin{enumerate}

\item $\displaystyle k = -(2m-2) + \frac{2 (m-1)}{1 + 2 n},\qquad \ell = -(2n-2)  -\frac{2 m + 2 n -1}{2 (m-1)}$,

\item $\displaystyle k = -(2m-2) -\frac{2 m + 2 n -1}{2 (n -1)},\qquad \ell  = -(2n-2) + \frac{2 (n-1)}{1 + 2 m}$.

\end{enumerate}

\end{thm}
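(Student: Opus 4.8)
The plan is to follow the same strategy as in the proofs of Theorems \ref{coin:d} and \ref{coin:c}, now applied to two copies of the family $\cD^k(\cdot)$. By Corollary \ref{cor:gp}, for each fixed integer $n\geq 1$ the algebra $\cD^k(n)$ is the simple quotient of $\cW^{\mathrm{ev},K_n}(c,\lambda)$ after a suitable localization, where $K_n$ is the prime ideal whose truncation curve $V(K_n)\subseteq\mathbb{C}^2$ is given parametrically by $k\mapsto (c_n(k),\lambda_n(k))$ as in \eqref{eq:realization} of Theorem \ref{main:realization}. For noncritical $k$ which is not a pole of $\lambda_n$, and similarly for $\ell$ not a pole of $\lambda_m$, both $\cD_k(m)$ and $\cD_\ell(n)$ arise as quotients of $\cW^{\mathrm{ev}}(c,\lambda)$, so by Theorem 8.1 and Corollary 8.2 of \cite{KL}, aside from the degenerate values $c=\tfrac12,-24$, every isomorphism $\cD_k(m)\cong\cD_\ell(n)$ must come from an intersection point of the truncation curves $V(K_m)$ and $V(K_n)$.

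The computational core is thus to determine $V(K_m)\cap V(K_n)$ for $m\neq n$. Since the curves $V(K_n)$ are rational, parametrized explicitly, one substitutes the parametrizations: solve $c_m(k)=c_n(\ell)$ and $\lambda_m(k)=\lambda_n(\ell)$ simultaneously for $(k,\ell)$, eliminating to obtain a polynomial system whose solutions are the intersection points. Up to the degenerate points $(\tfrac12,-\tfrac{2}{49})$ and $(-24,-\tfrac{1}{245})$ — which by Theorem 8.1 of \cite{KL} correspond to degenerate $c$ and must be excluded — and up to spurious solutions lying at poles of the $\lambda$'s (also excluded in the hypothesis), this should yield exactly the two families in the statement. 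One then checks directly that the two listed families do satisfy $c_m(k)=c_n(\ell)$ and $\lambda_m(k)=\lambda_n(\ell)$, and reads off the correspondence between the levels; note that the two families are interchanged by swapping $(k,m)\leftrightarrow(\ell,n)$, which reflects the symmetry of the problem and serves as a useful consistency check. Finally, as in the last paragraph of the proof of Theorem \ref{coin:d}, one must verify that no extra coincidences are hidden at the excluded points: for $k$ a pole of $\lambda_m$, the values of $\ell$ with $c_m(k)=c_n(\ell)$ are not poles of $\lambda_n$, so $\cD_\ell(n)$ is still a quotient of $\cW^{\mathrm{ev}}(c,\lambda)$ while $\cD_k(m)$ is not, and hence they cannot be isomorphic.

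The main obstacle is the elimination step: both $c_n$ and $\lambda_n$ are rational functions of $k$ with numerators and denominators of fairly high degree in $k$ (degree $4$ already appears in $p_n$), and treating $n$ as a symbolic parameter makes the resultant computation heavy, so one expects to do this with a computer algebra system and then factor the resulting bivariate polynomial in $m,n$ to isolate the genuine components. Once the intersection locus is in hand, converting each component from $(c,\lambda)$-coordinates back to level parameters — i.e., inverting $k\mapsto c_m(k)$ along the relevant branch — is routine but requires care to pick the correct branch, exactly as was done (and left to the reader) in Theorem \ref{coin:c}.
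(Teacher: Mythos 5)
Your proposal matches the paper's intended argument: the paper explicitly omits this proof, stating only that it is ``similar to the proof of Theorem \ref{coin:d},'' and your outline is precisely that argument transplanted to the two curves $V(K_m)$ and $V(K_n)$ --- reduce via Corollary 8.2 of \cite{KL} to computing the intersection of the truncation curves from Theorem \ref{main:realization}, discard the degenerate points $(\tfrac12,-\tfrac{2}{49})$ and $(-24,-\tfrac{1}{245})$, and rule out extra coincidences at poles of the $\lambda$'s exactly as in the last paragraph of the proof of Theorem \ref{coin:d}. The approach and all the necessary caveats (pole exclusion, branch inversion, the $(k,m)\leftrightarrow(\ell,n)$ symmetry as a consistency check) are correct and essentially identical to the paper's.
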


The proof is similar to the proof of Theorem \ref{coin:d} and is omitted.

\section{Some rational cosets}

By composing the map $V^k(\gs\go_{2n}) \rightarrow V^k(\gs\go_{2n+1})$ with the quotient map $V^k(\gs\go_{2n+1}) \rightarrow L_k(\gs\go_{2n+1})$, we obtain an embedding
$$\tilde{V}^k(\gs\go_{2n}) \hookrightarrow L_k(\gs\go_{2n+1}),$$ where $\tilde{V}^k(\gs\go_{2n})$ denotes the quotient of $V^k(\gs\go_{2n})$ by the kernel $\cJ_k$ of the above composition. In general, it is a difficult and important problem to determine when $\cJ_k$ is the maximal proper graded ideal, or equivalently, when $\tilde{V}^k(\gs\go_{2n}) = L_k(\gs\go_{2n})$. In the case where $k$ is an admissible level for $\widehat{\gs\go_{2n}}$, Lemma 2.1 of \cite{ACKL} would then imply that
$\text{Com}(L_k(\gs\go_{2n}), L_k(\gs\go_{2n+1}))$ is simple, and hence its orbifold $\text{Com}(L_k(\gs\go_{2n}), L_k(\gs\go_{2n+1}))^{\mathbb{Z}_2}$ would be simple as well \cite{DLM}. Additionally, Theorem 8.1 of \cite{CLII} would imply that $\text{Com}(L_k(\gs\go_{2n}), L_k(\gs\go_{2n+1}))^{\mathbb{Z}_2}$ coincides with the simple quotient $\cD_k(n)$ of $\cD^k(n)$. This is particularly interesting in the cases where $\cD_k(n)$ is strongly rational.

We conclude by proving this for first family in Theorem \ref{coin:c}. These are new examples of cosets of non-rational vertex algebras by admissible level affine vertex algebras, which are strongly rational.

\begin{lemma} \label{main:application} For $n\geq 2$ and $m\geq 0$, we have an embedding of simple affine vertex algebras $$L_k(\gs\go_{2n}) \hookrightarrow L_k(\gs\go_{2n+1}),\qquad k = -(2n-2) + \frac{1}{2} (2 n + 2 m -1).$$
\end{lemma}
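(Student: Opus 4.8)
The plan is to show that the kernel $\cJ_k$ of the composition $V^k(\gs\go_{2n}) \to V^k(\gs\go_{2n+1}) \to L_k(\gs\go_{2n+1})$ equals the maximal proper graded ideal of $V^k(\gs\go_{2n})$, so that $\tilde{V}^k(\gs\go_{2n}) = L_k(\gs\go_{2n})$. Write $k + h^\vee = k + (2n-2) = \tfrac{1}{2}(2n+2m-1)$; since $2n+2m-1$ is odd and at least $2n-2$, this is an admissible level, so by the structure theory of admissible-level vacuum modules the maximal graded submodule $\cN_k \subset V^k(\gs\go_{2n})$ is generated by a single singular vector $v_0$. Because $\tilde{V}^k(\gs\go_{2n})$ is a nonzero quotient of $V^k(\gs\go_{2n})$ we get $\cJ_k \subseteq \cN_k$ for free, so the lemma reduces to showing that $v_0$ maps to $0$ in $L_k(\gs\go_{2n+1})$, equivalently that $\tilde{V}^k(\gs\go_{2n})$ is simple.

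The first step is to identify the coset $\mathfrak{D}_k := \text{Com}(\tilde{V}^k(\gs\go_{2n}), L_k(\gs\go_{2n+1}))^{\mathbb{Z}_2}$. Restricting the surjection $V^k(\gs\go_{2n+1}) \twoheadrightarrow L_k(\gs\go_{2n+1})$ to $\cD^k(n) \subset V^k(\gs\go_{2n+1})$ yields a homomorphism $\cD^k(n) \to L_k(\gs\go_{2n+1})$; by Corollary \ref{cor:gp}, $\cD^k(n)$ is simple for our value of $k$ (which avoids the finitely many poles of $\lambda_n$, the critical levels, and the degenerate values of $c$), so this map is injective, and by the standard local-finiteness argument for the reductive group $\text{O}_{2n}$ its image is all of $\mathfrak{D}_k$. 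Hence $\mathfrak{D}_k \cong \cD^k(n)$, and Theorem \ref{coin:c}(1) evaluated at $k = -(2n-2) + \tfrac{1}{2}(2n+2m-1)$ gives $\mathfrak{D}_k \cong \cW_{\ell}(\gs\go_{2m+1})$ with $\ell = -(2m-1) + \tfrac{2m+2n-1}{2m+2n+1}$; as $\ell$ is a nondegenerate admissible level for $\gs\go_{2m+1}$, this is strongly rational by \cite{ArI,ArII}. Consequently $L_k(\gs\go_{2n+1})$ is a simple vertex algebra extension of $\tilde{V}^k(\gs\go_{2n}) \otimes \cW_{\ell}(\gs\go_{2m+1})$, the full commutant $\text{Com}(\tilde{V}^k(\gs\go_{2n}), L_k(\gs\go_{2n+1}))$ being a finite ($\mathbb{Z}_2$-)extension of the strongly rational algebra $\cW_{\ell}(\gs\go_{2m+1})$ and hence itself strongly rational.

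Next I would compare graded characters. The level $k$ is also admissible for $\widehat{\gs\go}_{2n+1}$, since $k + h^\vee(\gs\go_{2n+1}) = \tfrac{2n+2m+1}{2}$ with $2n+2m+1 \geq 2n$, so $\ch L_k(\gs\go_{2n+1})$ is given by the Kac--Wakimoto formula. Rationality of the coset forces a decomposition $L_k(\gs\go_{2n+1}) = \bigoplus_i M_i \otimes N_i$, with the $N_i$ ranging over the finitely many simple modules of the coset, $N_0$ the vacuum, and $M_0 = \text{Com}(\text{Com}(\tilde{V}^k(\gs\go_{2n}), L_k(\gs\go_{2n+1})), L_k(\gs\go_{2n+1})) = \tilde{V}^k(\gs\go_{2n})$ by the double-commutant property for affine pairs. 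Equating characters gives $\ch L_k(\gs\go_{2n+1}) = \sum_i \ch M_i \cdot \ch N_i$ with all $\ch N_i$ known; matching this against the admissible-level refinement of the classical level-rank branching for $L_{2n}(\gs\go_{2m+1}) \otimes L_{2m+1}(\gs\go_{2n})$ on $2n(2m+1)$ free fermions --- the relevant shifted levels here share the numerator $2m+2n-1$ and differ only between denominators $2$ and $2m+2n+1$ --- identifies each $M_i$ with an admissible $L_k(\gs\go_{2n})$-module and, in particular, shows $\ch M_0 = \ch L_k(\gs\go_{2n})$. Since $M_0 = \tilde{V}^k(\gs\go_{2n})$ surjects onto $L_k(\gs\go_{2n})$, the two coincide, which is the assertion of the lemma.

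The hard part is exactly this last comparison: breaking the apparent circularity. The one-parameter $\cW^{\mathrm{ev}}(c,\lambda)$ formalism produces the coset $\cW_{\ell}(\gs\go_{2m+1})$ cleanly but is silent about the simplicity of the affine subalgebra, and \cite{ACKL} delivers that simplicity only once it is already known --- so the argument must inject genuinely new information, namely the explicit admissible-level branching above. An alternative would be to compute the associated variety of $\tilde{V}^k(\gs\go_{2n})$ and show it is contained in (hence equal to) that of $L_k(\gs\go_{2n})$, which together with the reverse containment forces $\tilde{V}^k(\gs\go_{2n}) = L_k(\gs\go_{2n})$; yet another route, now that the coset is known to be strongly rational, is to run the extension machinery of \cite{CKM2} in the forward direction, producing a braid-reversed equivalence between module subcategories of $L_k(\gs\go_{2n})$ and $\cW_{\ell}(\gs\go_{2m+1})$ and recognizing the resulting simple-current-type extension as $L_k(\gs\go_{2n+1})$.
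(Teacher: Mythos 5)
Your proposal has a genuine gap, and in fact two. First, you invoke Corollary \ref{cor:gp} to claim that $\cD^k(n)$ is simple ``for our value of $k$'' and hence that $\cD^k(n)\to L_k(\gs\go_{2n+1})$ is injective. But Corollary \ref{cor:gp} asserts simplicity of $\cD^k(n)$ as a \emph{one-parameter} vertex algebra over (a localization of) $\mathbb{C}[k]$; this does not specialize to simplicity at individual values of $k$, and at $k = -(2n-2)+\tfrac12(2n+2m-1)$ it demonstrably fails: by Lemma \ref{stronggentype} the universal coset is of type $\cW(2,4,\dots,2N)$ with $N\geq 2n^2+3n$, while its simple quotient $\cD_k(n)\cong\cW_\ell(\gs\go_{2m+1})$ truncates much earlier, so the maximal ideal is nonzero and your injectivity claim collapses. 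Theorem \ref{coin:c} is a statement about the simple quotient $\cD_k(n)$, not about $\text{Com}(\tilde V^k(\gs\go_{2n}),L_k(\gs\go_{2n+1}))^{\mathbb{Z}_2}$; identifying the two is exactly what requires the simplicity of the affine image (via Lemma 2.1 of \cite{ACKL} and Theorem 8.1 of \cite{CLII}), which is what you are trying to prove. Second, the decisive step of your argument --- matching the decomposition $L_k(\gs\go_{2n+1})=\bigoplus_i M_i\otimes N_i$ against an ``admissible-level refinement of the classical level-rank branching'' to conclude $\ch M_0=\ch L_k(\gs\go_{2n})$ --- is asserted rather than proved; no such branching rule is established or cited, and you yourself flag this as ``the hard part.'' As written, all the mathematical content of the lemma is concentrated in that unproved step.

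The paper's actual proof avoids the coset machinery entirely and is much more elementary: it is an induction on $m$. The base case $m=0$, i.e.\ $k=-n+\tfrac32$, is the known conformal embedding $L_k(\gs\go_{2n})\hookrightarrow L_k(\gs\go_{2n+1})$ from \cite{AKMPP}. For the inductive step one uses the Kac--Wakimoto diagonal embedding of simple affine vertex algebras $L_{k+1}(\gs\go_{2n+1})\hookrightarrow L_k(\gs\go_{2n+1})\otimes\cF(2n+1)$ valid at admissible $k$, composes the inductive embedding $L_k(\gs\go_{2n})\hookrightarrow L_k(\gs\go_{2n+1})$ with $L_{k+1}(\gs\go_{2n})\hookrightarrow L_k(\gs\go_{2n})\otimes\cF(2n)$, and observes that the image lands inside the image of $L_{k+1}(\gs\go_{2n+1})$. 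You may wish to note that the rationality consequences you are reaching for (the corollaries following the lemma) are then deduced \emph{from} the lemma, not used to prove it; reversing that logical order is precisely the circularity you correctly sensed but did not escape.
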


\begin{proof} We proceed by induction on $m$. In the case $m=0$, we have $k  = -n + \frac{3}{2}$, and it is well known that there exists a conformal embedding $L_k(\gs\go_{2n}) \hookrightarrow L_{k}(\gs\go_{2n+1})$, see e.g. Section 3 of \cite{AKMPP}. Next, we assume the result for $m-1$, so that
$k = -n + \frac{3}{2} + m-1$. Recall that the rank $2n+1$ free fermion algebra $\cF(2n+1)$ admits an action of $L_1(\gs\go_{2n+1})$, as well as an action of $L_1(\gs\go_{2n})$ via the embedding $L_1(\gs\go_{2n}) \hookrightarrow L_1(\gs\go_{2n+1})$. The image of $L_1(\gs\go_{2n})$ lies in the subalgebra $\cF(2n) \subseteq \cF(2n+1)$.

Since $k$ is admissible for $\gs\go_{2n+1}$, it is known \cite{KW} that we have a diagonal embedding of simple affine vertex algebras 
\begin{equation} \label{affembedd} L_{k+1}(\gs\go_{2n+1}) \hookrightarrow L_{k}(\gs\go_{2n+1}) \otimes \cF(2n+1).\end{equation} 

By induction, we have the map $L_k(\gs\go_{2n}) \hookrightarrow L_{k}(\gs\go_{2n+1})$. Then we have an embedding
\begin{equation} \label{affembedd2} L_{k+1}(\gs\go_{2n}) \hookrightarrow L_k(\gs\go_{2n}) \otimes \cF(n) \hookrightarrow L_{k}(\gs\go_{2n+1}) \otimes \cF(2n+1),\end{equation} where $\cF(2n) \hookrightarrow \cF(2n+1)$ is the isomorphism onto the first $2n$ copies.
Since the image of \eqref{affembedd2} lies in the image of \eqref{affembedd}, it follows that $L_{k+1}(\gs\go_{2n})$ embeds in $L_{k+1}(\gs\go_{2n+1})$. \end{proof} 

This has the following immediate consequence.

\begin{cor} For $n\geq 2$, $m\geq 0$, and $k = -(2n-2) + \frac{1}{2} (2 n + 2 m -1)$, we have an isomorphism
$$\text{Com}(L_k(\gs\go_{2n}) , L_k(\gs\go_{2n+1}))^{\mathbb{Z}_2} \cong \cW_{\ell}(\gs\go_{2m+1}),\qquad \displaystyle  \ell = -(2m-1) + \frac{2 m + 2 n -1}{2 m + 2 n+1}.$$ In particular, $\text{Com}(L_k(\gs\go_{2n}) , L_k(\gs\go_{2n+1}))^{\mathbb{Z}_2}$ is strongly rational.
\end{cor}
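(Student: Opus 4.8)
The plan is to assemble four ingredients already in hand: the embedding of simple affine vertex algebras from Lemma~\ref{main:application}, the simplicity‑and‑identification discussion at the start of this section, the first family of coincidences in Theorem~\ref{coin:c}, and Arakawa's rationality theorem for nondegenerate admissible levels \cite{ArI,ArII}. First I would record that $k = -(2n-2) + \frac{1}{2}(2n+2m-1)$ is an admissible level for $\widehat{\gs\go}_{2n}$: writing $k + h^\vee = \frac{2n+2m-1}{2}$ with $h^\vee = 2n-2$ the dual Coxeter number, the numerator is odd, hence coprime to the denominator $2$, and $2n+2m-1 \geq 2n-2 = h^\vee$; since $\gs\go_{2n}$ is simply laced this is exactly the condition for $k$ to be (principal) admissible.

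Next I would feed Lemma~\ref{main:application} into the machinery recalled above. That lemma asserts precisely that at this level the map $V^k(\gs\go_{2n}) \to L_k(\gs\go_{2n+1})$ has image $L_k(\gs\go_{2n})$, i.e.\ $\tilde V^k(\gs\go_{2n}) = L_k(\gs\go_{2n})$; combined with admissibility of $k$, Lemma 2.1 of \cite{ACKL} gives that $\text{Com}(L_k(\gs\go_{2n}), L_k(\gs\go_{2n+1}))$ is simple, \cite{DLM} gives that its $\mathbb{Z}_2$-orbifold is again simple, and Theorem 8.1 of \cite{CLII} identifies this orbifold with the simple quotient $\cD_k(n)$ of $\cD^k(n)$.

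It then remains to apply Theorem~\ref{coin:c}. For $m \geq 2$ I would check that our $k$ is neither of the critical levels $-2n+2, -2n+1$, is not a pole of $\lambda_n$, and gives $c_n(k) \neq \frac{1}{2}, -24$ — all elementary, since $c_n(k)$ and $\lambda_n(k)$ are explicit rational functions — so that the first family of that theorem yields $\cD_k(n) \cong \cW_\ell(\gs\go_{2m+1})$ with $\ell = -(2m-1) + \frac{2m+2n-1}{2m+2n+1}$. The small cases are handled directly: for $m = 0$ one has the conformal embedding $L_k(\gs\go_{2n}) \hookrightarrow L_k(\gs\go_{2n+1})$, whose orbifolded coset is $\mathbb{C} = \cW_\ell(\gs\go_1)$, and for $m = 1$ the algebra $\cW_\ell(\gs\go_3)$ is a Virasoro minimal model, which can be matched by hand. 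For the concluding assertion I would verify that $\ell$ is a nondegenerate admissible level for $\gs\go_{2m+1}$: writing $\ell + h^\vee = \frac{2m+2n-1}{2m+2n+1}$ with $h^\vee = 2m-1$, the numerator and denominator are coprime odd integers, the denominator $q = 2m+2n+1$ is coprime to the lacing number $r^\vee = 2$ of type $B_m$, and $p = 2m+2n-1$ and $q = 2m+2n+1$ both exceed $2m$; by \cite{ArI,ArII}, $\cW_\ell(\gs\go_{2m+1})$ is then $C_2$-cofinite and rational, i.e.\ strongly rational, hence so is the orbifold via the isomorphism just established.

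I expect the only genuinely delicate step to be the middle one — confirming that passing to simple quotients and then to $\mathbb{Z}_2$-invariants reproduces $\cD_k(n)$ on the nose, where both hypotheses "$k$ admissible" and "$\tilde V^k(\gs\go_{2n}) = L_k(\gs\go_{2n})$" are essential; everything else, including the verification that our special level avoids all the degenerate, critical, and pole values excluded in Theorem~\ref{coin:c}, is routine bookkeeping. The substantive work, of course, was already carried out in Lemma~\ref{main:application}.
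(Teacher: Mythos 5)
Your proposal is correct and follows essentially the same route as the paper's own (very terse) proof: establish that $\text{Com}(L_k(\gs\go_{2n}),L_k(\gs\go_{2n+1}))^{\mathbb{Z}_2}$ is simple and is the image of $\cD^k(n)$, hence equals $\cD_k(n)$, and then invoke the first family of Theorem~\ref{coin:c} together with Arakawa's rationality theorem. If anything you are more careful than the paper, which silently applies Theorem~\ref{coin:c} (stated only for $m\geq 2$) for all $m\geq 0$, whereas you treat $m=0,1$ separately; your admissibility and nondegeneracy checks are likewise implicit in the paper.
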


\begin{proof} This follows from Theorem \ref{coin:c} together with the fact that $\text{Com}(L_k(\gs\go_{2n}) , L_k(\gs\go_{2n+1}))^{\mathbb{Z}_2}$ is simple, and the map $\cD^k(n) \ra \text{Com}(L_k(\gs\go_{2n}) , L_k(\gs\go_{2n+1}))^{\mathbb{Z}_2}$ is surjective.
\end{proof}

Recall that the category of ordinary modules of an affine vertex algebra at admissible level is semisimple \cite{ArIII} and a vertex tensor category \cite{CHY}. Conjecturally, this category is fusion \cite{CHY} and this has been proven for simply-laced Lie algebras \cite{C}. For type $\gs\go_{2n+1}$ and level $k = -(2n-2) + \frac{1}{2} (2 n + 2 m -1)$ this conjecture is also true. First, $\text{Com}(L_k(\gs\go_{2n}) , L_k(\gs\go_{2n+1}))$ is a simple current extension, call it $\cV_\ell(\gs\go_{2m+1})$, of $\cW_{\ell}(\gs\go_{2m+1})$ and thus rational as well \cite{Li}. It follows that $L_k(\gs\go_{2n+1})$ is a simple $\mathbb Z$-graded extension of  $L_k(\gs\go_{2n}) \otimes \cV_\ell(\gs\go_{2m+1})$ in a rigid vertex tensor category $\mathcal C$ of $L_k(\gs\go_{2n}) \otimes \cV_\ell(\gs\go_{2m+1})$-modules, namely the Deligne product of the categories of ordinary $L_k(\gs\go_{2n})$-modules and $\cV_\ell(\gs\go_{2m+1})$-modules. Every ordinary module for $L_k(\gs\go_{2n+1})$ must be an object in this category $\mathcal C$. This means that as a braided tensor category the category of ordinary modules of  $L_k(\gs\go_{2n+1})$ is equivalent to the category of local modules for $L_k(\gs\go_{2n+1})$ viewed as an algebra object in $\mathcal C$ \cite{HKL, CKM1}.
All assumptions of Theorem 5.12 of \cite{CKM2} are satisfied (with $U=\cV_\ell(\gs\go_{2m+1})$ and $V=L_k(\gs\go_{2n})$) and so 
\begin{cor}
The category of ordinary modules of $L_k(\gs\go_{2n+1})$ at level $k = -(2n-2) + \frac{1}{2} (2 n + 2 m -1)$ is fusion. 
\end{cor}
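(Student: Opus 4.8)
The plan is to exhibit the category of ordinary $L_k(\gs\go_{2n+1})$-modules as the category of local modules for a commutative algebra object in an ambient rigid braided semisimple tensor category assembled from pieces that are already under control, and then to invoke the general vertex-algebra extension theorem of \cite{CKM2}. The first step is to collect the inputs supplied by the preceding corollary: at $k = -(2n-2)+\frac{1}{2}(2n+2m-1)$ there is an embedding of simple affine vertex algebras $L_k(\gs\go_{2n}) \hookrightarrow L_k(\gs\go_{2n+1})$, the coset orbifold $\text{Com}(L_k(\gs\go_{2n}),L_k(\gs\go_{2n+1}))^{\mathbb{Z}_2}$ is isomorphic to $\cW_\ell(\gs\go_{2m+1})$ with $\ell = -(2m-1)+\frac{2m+2n-1}{2m+2n+1}$, and this $\cW$-algebra is strongly rational because $\ell$ is a nondegenerate admissible level for $\gs\go_{2m+1}$.

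The second step is to pass from the orbifold to the full coset $\cV_\ell(\gs\go_{2m+1}) := \text{Com}(L_k(\gs\go_{2n}),L_k(\gs\go_{2n+1}))$. Since the relevant $\mathbb{Z}_2$-action has $\cW_\ell(\gs\go_{2m+1})$ as its fixed-point subalgebra, $\cV_\ell(\gs\go_{2m+1})$ is a $\mathbb{Z}_2$-simple-current extension of a strongly rational vertex algebra, hence is itself strongly rational \cite{Li}, and $\mathrm{Rep}(\cV_\ell(\gs\go_{2m+1}))$ is a modular tensor category. I would then form the ambient category $\mathcal C = \mathcal O_{L_k(\gs\go_{2n})} \boxtimes \mathrm{Rep}(\cV_\ell(\gs\go_{2m+1}))$, the Deligne product of the category of ordinary $L_k(\gs\go_{2n})$-modules with the module category of $\cV_\ell(\gs\go_{2m+1})$: since $k$ is an admissible level for the simply-laced Lie algebra $\gs\go_{2n}$, the first factor is semisimple \cite{ArIII}, carries a vertex tensor structure \cite{CHY}, and is rigid \cite{C}, so $\mathcal C$ is a rigid braided semisimple tensor category.

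The third step is to recognize $A := L_k(\gs\go_{2n+1})$ as a simple, $\mathbb{Z}$-graded, haploid commutative algebra object in $\mathcal C$ containing the conformal subalgebra $L_k(\gs\go_{2n}) \otimes \cV_\ell(\gs\go_{2m+1})$, and to check that every ordinary $L_k(\gs\go_{2n+1})$-module restricts to an object of $\mathcal C$ — the $\gs\go_{2n}$-summand is an ordinary module since $L_0$ acts with finite-dimensional eigenspaces and spectrum bounded below, and the $\cV_\ell(\gs\go_{2m+1})$-part lies in $\mathrm{Rep}$ by rationality. By \cite{HKL,CKM1} this identifies the category of ordinary $L_k(\gs\go_{2n+1})$-modules, as a braided tensor category, with the category $\mathrm{Rep}^0(A)$ of local $A$-modules in $\mathcal C$. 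Finally I would apply Theorem 5.12 of \cite{CKM2} with $U = \cV_\ell(\gs\go_{2m+1})$ and $V = L_k(\gs\go_{2n})$, whose hypotheses (rationality of $U$, and rigidity and semisimplicity of the braided tensor category of ordinary $V$-modules) are exactly what has been arranged; the conclusion is that $\mathrm{Rep}^0(A)$, and hence the category of ordinary $L_k(\gs\go_{2n+1})$-modules, is rigid, braided and semisimple, i.e. fusion.

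The main obstacle is the verification that the hypotheses of Theorem 5.12 of \cite{CKM2} genuinely apply in this non-$C_2$-cofinite situation: one needs that $\mathcal O_{L_k(\gs\go_{2n})}$ is a rigid braided tensor category admitting a Deligne product against a modular tensor category — which rests on \cite{CHY} for the vertex tensor structure and on \cite{C} for rigidity in the simply-laced case — and that $A = L_k(\gs\go_{2n+1})$ is an algebra object of precisely the kind required by \cite{CKM2} (associative, commutative, with trivial ribbon twist, integer conformal weights, and one-dimensional $\Hom(\mathbf{1},A)$). One also has to rule out ordinary $L_k(\gs\go_{2n+1})$-modules that do not lift to objects of $\mathcal C$, which follows from semisimplicity of the ordinary module category at admissible level \cite{ArIII} together with the fact that restriction along the conformal embedding sends ordinary modules to ordinary modules.
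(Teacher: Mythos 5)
Your proposal is correct and follows essentially the same route as the paper: realize the full coset $\cV_\ell(\gs\go_{2m+1})$ as a rational simple current extension of $\cW_\ell(\gs\go_{2m+1})$ via \cite{Li}, form the Deligne product of the ordinary $L_k(\gs\go_{2n})$-module category (semisimple by \cite{ArIII}, a rigid vertex tensor category by \cite{CHY} and \cite{C} since $\gs\go_{2n}$ is simply-laced and $k$ is admissible) with $\mathrm{Rep}(\cV_\ell(\gs\go_{2m+1}))$, identify ordinary $L_k(\gs\go_{2n+1})$-modules with local modules for the algebra object via \cite{HKL,CKM1}, and conclude by Theorem 5.12 of \cite{CKM2} with the same choice $U=\cV_\ell(\gs\go_{2m+1})$, $V=L_k(\gs\go_{2n})$. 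The extra hypothesis-checking you flag at the end is a reasonable elaboration of steps the paper leaves implicit, not a divergence in method.
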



\begin{thebibliography}{ABKS}


\bibitem[ArI]{ArI} T. Arakawa, \textit{Associated varieties of modules over Kac-Moody algebras and $C_2$-cofiniteness of $\cW$-algebras}, Int. Math. Res. Not. (2015)  Vol.\ 2015 11605-11666.
\bibitem[ArII]{ArII} T. Arakawa, \textit{Rationality of $\cW$-algebras: principal nilpotent cases}, Ann. Math. 182 (2015), no. 2, 565-694.


\bibitem[ArIII]{ArIII} T. Arakawa, \textit{Rationality of admissible affine vertex algebras in the category ${\mathcal{O}}$}, Duke Math. J. \textbf{165} (2016) no.1, 67-93

\bibitem[Ad]{Ad} D Adamovic, \textit{Rationality of Neveu-Schwarz vertex operator superalgebras}, Int. Math. Res. Not., 1997:865-874, 1997.

\bibitem[AKMPP]{AKMPP}
D.~Adamovi\'c, V.~G.~Kac, P.~M\"oseneder Frajria, P.~Papi and O.~Per\v{s}e, \textit{Finite vs. infinite decompositions in conformal embeddings},
Comm. Math. Phys. \textbf{348} (2016) no. 2, 445-473.

\bibitem[ACL]{ACL} T. Arakawa, T. Creutzig, and A. Linshaw, \textit{W-algebras as coset vertex algebras}, Invent. Math. 218 (2019), no. 1, 145-195.

\bibitem[ACKL]{ACKL} T. Arakawa, T. Creutzig, K. Kawasetsu, and A. Linshaw, \textit{Orbifolds and cosets of minimal $\cW$-algebras}, Comm. Math. Phys. 355, No. 1 (2017), 339-372.
 
 \bibitem[ALY]{ALY} T. Arakawa, C. H. Lam, and H. Yamada, \textit{Parafermion vertex operator algebras and $\cW$-algebras}, Trans. Amer. Math. Soc. 371 (2019), no. 6, 4277-4301.
 
 
\bibitem[C]{C} T. Creutzig, \textit{Fusion categories for affine vertex algebras at admissible levels}, Selecta Math. (N.S.) 25 (2019), no. 2, Paper No. 27, 21 pp.

 
 \bibitem[CGN]{CGN}
T.~Creutzig, N.~Genra and S.~Nakatsuka, \textit{Duality of subregular W-algebras and principal W-superalgebras}, Adv. Math. 383 (2021), 107685, 52 pp.
 
 
\bibitem[CHR]{CHR}
T.~Creutzig, Y.~Hikida and P.~B.~R\o{}nne,
\textit{$N=1$ supersymmetric higher spin holography on AdS$_3$}, JHEP \textbf{02} (2013), 019. 
 
\bibitem[CHY]{CHY}
T.~Creutzig, Y.~Z.~Huang and J.~Yang, \textit{Braided tensor categories of admissible modules for affine Lie algebras},
Comm. Math. Phys. \textbf{362} (2018) no.3, 827-854. 
 
\bibitem[CKMI]{CKM1}
T.~Creutzig, S.~Kanade and R.~McRae, \textit{Tensor categories for vertex operator superalgebra extensions}, arXiv:1705.05017, to appear in Memoirs AMS.
 
\bibitem[CKMII]{CKM2}
T.~Creutzig, S.~Kanade and R.~McRae, \textit{Glueing vertex algebras}, arXiv:1906.00119. 
 
 
 
\bibitem[CLI]{CLI} T. Creutzig and A. Linshaw, \textit{The super $\cW_{1+\infty}$ algebra with integral central charge}, Trans. Amer. Math. Soc. 367 (2015), no. 8, 5521-5551.
\bibitem[CLII]{CLII} T. Creutzig and A. Linshaw, \textit{Cosets of affine vertex algebras inside larger structures}, J. Algebra 517 (2019) 396-438.

\bibitem[CLIII]{CLIII} T. Creutzig and A. Linshaw, \textit{Trialities of $\mathcal{W}$-algebras}, arXiv:2005.10234.

\bibitem[CV]{CV}
C.~Candu and C.~Vollenweider,
The $\mathcal{N} =$ 1 algebra $\mathcal{W}_\infty[\mu]$ and its truncations,
JHEP \textbf{11} (2013), 032.

\bibitem[DSK]{DSK} A. De Sole and V. Kac, \textit{Freely generated vertex algebras and non-linear Lie conformal algebras}, Comm. Math. Phys. 254 (2005), no. 3, 659-694. 

\bibitem[DLY]{DLY} C. Dong, C. Lam, and H. Yamada, \textit{$\cW$-algebras related to parafermion algebras}, J. Algebra 322 (2009), no. 7, 2366-2403. 

\bibitem[DLM]{DLM} C. Dong, H. Li, and G. Mason, \textit{Compact automorphism groups of vertex operator algebras}, Internat. Math. Res. Notices 1996, no. 18, 913--921.


 \bibitem[GL]{GL} N. Genra and A. Linshaw, \textit{Ito's conjecture and the coset realization of $\cW^k(sl(3|2))$}, arXiv:1901.02397, to appear in RIMS Kokyuroku Bessatsu.

\bibitem[HKL]{HKL}
Y.~Z.~Huang, A.~Kirillov and J.~Lepowsky, \textit{Braided tensor categories and extensions of vertex operator algebras},
Comm. Math. Phys. \textbf{337} (2015) no.3, 1143-1159.

\bibitem[I]{I} K. Ito, \textit{Quantum Hamiltonian reduction and $N=2$ coset models}, Phys. Lett. B 259 (1991) 73-78.





\bibitem[KW]{KW} V. G. Kac and M. Wakimoto, \text{Branching functions for winding subalgebras and tensor products}, Acta Appl. Math. 21(1-2): 3-39, 1990.



\bibitem[KL]{KL} S. Kanade and A. Linshaw, \textit{Universal two-parameter even spin $\cW_{\infty}$-algebra}, Adv. Math. 355 (2019), 106774, 58pp. 


\bibitem[Li]{Li} H. Li, \textit{Extension of vertex operator algebras by a self-dual simple module}, J. Algebra 187 (1997) 236-267.


\bibitem[LI]{LI} A. Linshaw, \textit{Invariant theory and the Heisenberg vertex algebra}, Int. Math. Res. Notices, 17 (2012), 4014-4050.
\bibitem[LII]{LII} A. Linshaw, \textit{Invariant subalgebras of affine vertex algebras}, Adv. Math. 234 (2013), 61-84.


\bibitem[LIII]{LIII} A. Linshaw, \textit{Universal two-parameter $\cW_{\infty}$-algebra and vertex algebras of type $\cW(2,3,\dots, N)$}, Compos. Math. 157 (2021), no. 1, 12-82.



\bibitem[W]{W} H. Weyl, \textit{The Classical Groups: Their Invariants and Representations}, Princeton University Press, 1946.

\end{thebibliography}
\end{document}